
\documentclass[12pt,thmsa]{article}
\usepackage{amssymb}
\usepackage{amsfonts}
\usepackage{amsmath}
\usepackage[top=3.8cm,bottom=3.8cm,textwidth=16cm,centering,a4paper]{geometry}
\usepackage{diagbox}
\usepackage{slashbox}

\setcounter{MaxMatrixCols}{10}

\setlength{\textheight}{235mm} \setlength{\textwidth}{175mm}
\setlength{\voffset}{-7mm} \setlength{\hoffset}{-9mm}

\newtheorem{theorem}{Theorem}[section]

\newtheorem{corollary}[theorem]{Corollary}
\newtheorem{remark}[theorem]{Remark}

\newtheorem{lemma}[theorem]{Lemma}

\newtheorem{definition}[theorem]{Definition}
\newenvironment{proof}[1][Proof]{\noindent\textbf{#1.} }{\ \rule{0.5em}{0.5em}}

\begin{document}

\title{Three bound states with prescribed angular momentum to the
cubic-quintic NLS equations in $\mathbb{R}^{3}$}
\date{{\footnotesize \textit{Dedicated to the memory of Professor Tsung-Fang
Wu}}}
\author{Shuai Yao$^{a}$\thanks{%
E-mail address: shyao@sdut.edu.cn (S. Yao)}, Juntao Sun$^{a}$\thanks{%
E-mail address: jtsun@sdut.edu.cn (J. Sun)} \\
{\footnotesize $^{a}$\emph{School of Mathematics and Statistics, Shandong
University of Technology, Zibo 255049, PR China}}}
\maketitle

\begin{abstract}
In this paper, we investigate bound states with prescribed angular momentum
and mass for the nonlinear Schr\"{o}dinger equations (NLS) with the
cubic-quintic nonlinearity in dimensions three. We demonstrate that there
exist three solutions for the double constrained problem: a local minimizer,
a mountain pass type solution, and a global minimizer. Moreover, by means of
the minimax method, we construct a new mountain pass path and further obtain
the geometric link among the three solutions as well as a comparison of
their energy levels. This seems to be the first paper concerning three
solutions, with the method also being applicable to the single constraint
problem.
\end{abstract}

\textbf{Keywords:} Prescribed angular momentum; Multiple solutions; Orbital
stability; Variational methods.

\textbf{2020 Mathematics Subject Classification:} 35A15; 35Q41; 35B35.

\section{Introduction}

Consider the following the cubic-quintic NLS equations
\begin{equation}
\left\{
\begin{array}{ll}
i\partial _{t}\psi =H\psi -|\psi |^{2}\psi +\mu |\psi |^{4}\psi , & \text{
in }\mathbb{R\times R}^{3}, \\
\psi |_{t=0}=\psi _{0}, &
\end{array}%
\right.   \label{e1-1}
\end{equation}%
where the constants $\mu >0$ characterize the interaction strengths, and
\begin{equation*}
H=-\frac{1}{2}\Delta +V(x).
\end{equation*}%
Here, the potential $V(x)$ is supposed to be trapping, a standard example
being the polyharmonic potential given by $V(x)=\omega |x|^{k}$ with $k>2$,
where $\omega >0$ is the trapping frequency. The cubic-quintic Schr\"{o}%
dinger equation has many interpretations in physics. In the context of a
Boson gas, it describes two-body attractive and three-body repulsive
interactions. The cubic-quintic nonlinearity was introduced in \cite{PPT}
and is employed in numerous physical models \cite{M}. For three dimensional,
it models the evolution of a monochromatic wave complex envelope in a medium
with weakly saturating nonlinearity.

Our working space $\Sigma $ is defined as
\begin{equation*}
\Sigma :=\left\{ u\in H^{1}(\mathbb{R}^{3},\mathbb{R}):\int_{\mathbb{R}%
^{d}}|x|^{k}|u|^{2}dx<+\infty \right\} ,
\end{equation*}%
which is a Hilbert space with the inner product and norm
\begin{equation*}
(u,v)_{\Sigma }:=\int_{\mathbb{R}^{d}}(\frac{1}{2}\nabla u\nabla v+\omega
|x|^{k}uv+uv)dx\quad \text{and }\Vert u\Vert _{\Sigma }^{2}:=\Vert u\Vert _{%
\dot{\Sigma}}^{2}+\Vert u\Vert _{2}^{2},
\end{equation*}%
where $\Vert u\Vert _{\dot{\Sigma}}^{2}:=\frac{1}{2}\Vert \nabla u\Vert
_{2}^{2}+\omega \Vert x^{k/2}u\Vert _{2}^{2}$.

When the local well-posedness holds, then there exist $T_{\max}>0$
and a unique solution $\psi\in C((-T_{\max}, T_{\max}), \Sigma)$ to Eq. (\ref%
{e1-1}) conserving the mass
\begin{equation*}
M(\psi):=\|\psi\|_{2}^{2}
\end{equation*}
and the total energy
\begin{equation*}
E(\psi):=\int_{\mathbb{R}^{3}}\left(\frac{1}{2}|\nabla
\psi|^{2}+\omega|x|^{k}|\psi|^{2}-\frac{1}{2}|\psi|^{4}+\frac{\mu}{3}%
|\psi|^{6}\right)dx.
\end{equation*}
Indeed, besides the mass and total energy, another important physical
quantity exists, namely the mean angular momentum of $\psi$ around a
specified rotation axis in $\mathbb{R}^{3}$. To elaborate further and
without loss of generality, we denote the coordinates of a point in $\mathbb{%
R}^{3}$ by $(x_{1},x_{2},z)$ and assume that the rotation axis is the $z$%
-axis. Consequently, the mean angular momentum of $\psi$ is expressed as
\begin{equation*}
L(u):=\left\langle\psi,L_{z}\psi\right\rangle_{L^{2}(\mathbb{R}^{2})},
\end{equation*}
where
\begin{equation*}
L_{z}u=-i(x_{1}\partial_{x_{2}}\psi-x_{2}\partial_{x_{1}}\psi).
\end{equation*}
In quantum mechanics \cite{B}, the angular momentum of a particle
corresponds to the operator $\mathbb{L}:=-ix\wedge\nabla$, so $L_{z}$ is the
third component of the quantum mechanical angular momentum operator $\mathbb{%
L}$. A straightforward calculation (see \cite{AM}) demonstrates that the
time-evolution of $L(u)$ under the flow of(\ref{e1-1}) satisfies:
\begin{equation*}
L(\psi(t))+\mathrm{i} \int_{0}^{t} \int_{\mathbb{R}^{3}}|\psi(\tau)|^{2}
L_{z} V d x d \tau=L\left(\psi_{0}\right).
\end{equation*}
Since $V(x)$ is radially symmetric, it holds that $L_{z}V=0$. Thus the
dynamics of (\ref{e1-1}) also satisfies the angular momentum conservation
law
\begin{equation*}
L(\psi(t))=L(\psi_{0}).
\end{equation*}

Let us now consider a given angular velocity $\Omega\in \mathbb{R}$ and
recall that $\Omega L_{z}$ is the generator of time-dependent rotations
about the $z$-axis. Specifically, for any $f\in L^{2}(\mathbb{R}^{3})$, the
action of $L_{z}$ is characterized by
\begin{equation*}
e^{\mathrm{i} t \Omega L_{z}} f(x)=f\left(e^{t \Theta} x\right),
\end{equation*}
where $\Theta$ is the skew symmetric matrix given by
\begin{equation*}
\Theta:=\left(%
\begin{array}{ccc}
0 & \Omega & 0 \\
-\Omega & 0 & 0 \\
0 & 0 & 0%
\end{array}%
\right).
\end{equation*}
It is evident that $\left(e^{t\Omega L_{z}}\right)_{t\in\mathbb{R}}$
constitutes a family of unitary operators
\begin{equation*}
e^{t \Omega L_{z}}: \Sigma \rightarrow \Sigma.
\end{equation*}
Define
\begin{equation*}
v(t, x):=e^{ t \Omega L_{z}}\psi(t, x)=\psi\left(t, e^{t \Theta} x\right).
\end{equation*}
By a straightforward calculation, then $v$ satisfies the equation
\begin{equation}  \label{e1-2}
\left\{
\begin{array}{ll}
i\partial_{t}v=Hv-|v|^{2}v+\mu|v|^{4}v-\Omega L_{z}v, &  \\
v|_{t=0}=v_{0}. &
\end{array}%
\right.
\end{equation}
In particular, it holds that $L(v(t))=L(\psi(t))=L(\psi_{0})$. The equation (%
\ref{e1-2}) appears in the mean-field description of rotating Bose-Einstein
condensates.

A standing wave of Eq. (\ref{e1-2}) with a prescribed angular momentum $l>0$
and mass $m>0$ is a solution having the form $v(x,t)=e^{-i\lambda t}u(x)$
such that $u$ weakly solves
\begin{equation}
\begin{array}{ll}
Hu-\lambda u-\Omega L_{z}u=|u|^{2}u-\mu |u|^{4}u, & \quad \forall x\in
\mathbb{R}^{3}.%
\end{array}
\label{e1-3}
\end{equation}%
and
\begin{equation}
\int_{\mathbb{R}^{3}}|u|^{2}dx=m,\quad \int_{\mathbb{R}^{3}}L_{z}u\bar{u}%
dx=l.  \label{e1-4}
\end{equation}%
In this respect, solutions to problem (\ref{e1-3})-(\ref{e1-4}) can be
obtained by searching critical points of the energy functional
\begin{equation*}
E(u):=\int_{\mathbb{R}^{3}}\left[ \frac{1}{2}|\nabla u|^{2}+\omega
|x|^{k}|u|^{2}-\frac{1}{2}|u|^{4}+\frac{\mu }{3}|u|^{6}\right] dx
\end{equation*}%
on the constraint
\begin{equation*}
S(m,l):=\left\{ u\in \Sigma :M(u)=m,L(u)=l\right\} .
\end{equation*}%
Here $\lambda ,\Omega \in \mathbb{R}$ are unknown, which appear as Lagrange
multipliers associated to the double constraints.

In recent years, for a single mass constraint case, the existence
and multiplicity of normalized solutions have been studied by many scholars,
see \cite{ANS,BC,BCPY,C1,CRY,F,GLY,LY}. However, there has been relatively
little research on the doubly constraint problem (\ref{e1-3})-(\ref{e1-4}).
As far as we know, it seems that this type of problem was first studied in
\cite{NS}. For the single power term, i.e. $|u|^{p-2}u$ with $2<p<2+\frac{4}{%
N}$, the energy functional $E$ restricted on $S(m,l)$ is bounded from below.
In \cite{NS}, they obtained solutions to problem (\ref{e1-3})-(\ref{e1-4})
by searching the global minimizer, and orbital stability of solutions was
established. As a complement of the investigation conducted in \cite{NS},
Gou and Shen \cite{GS} considered the mass supercritical case, and they
proved that there exist two non-radial symmetric solutions, one of which is
local minimizer and the other is mountain pass type solution. It is natural
to ask a question: in the case of combining nonlinear terms, will it have an
impact on the number of solutions? This is also the research motivation and
purpose of this paper. Therefore, here we consider the case of combining
nonlinear terms with the focusing cubic and defocusing quintic, which have
important physical significance.

Now, we present the main results. Set
\begin{equation*}
\alpha _{m,l}:=\inf_{u\in S(m,l)}E(u).
\end{equation*}%
For any given $\rho >0$, define
\begin{equation*}
\beta _{m,l}^{\rho }:=\inf_{u\in S(m,l)\cap B_{\rho }}E(u),
\end{equation*}%
where
\begin{equation}
B_{\rho }=\left\{ u\in \Sigma :\Vert u\Vert _{\dot{\Sigma}}^{2}\leq \rho
\right\} .  \label{e1-5}
\end{equation}

\begin{theorem}
\label{t1} Let $\omega >0$. Then there exist $m^{\ast }>0$ and $\mu _{\ast
}>0$ such that for $m<m^{\ast }$ and $\mu <\mu _{\ast }$, there admit two
solutions $u^{(1)}$ and $u^{(2)}$ for problem (\ref{e1-3})-(\ref{e1-4}),
where $u^{(1)}$ is a local minimizer satisfying $E(u^{(1)})=\beta
_{m,l}^{\rho }$, and $u^{(2)}$ is a global minimizer satisfying $%
E(u^{(2)})=\alpha _{m,l}$. Moreover, if the local well-posedness holds, then the set of global minimizer
\begin{equation*}
\mathcal{A}_{m,l}:=\left\{ u\in S(m,l):E(u)=\alpha _{m,l}\right\}
\end{equation*}%
and the set of local minimizer
\begin{equation*}
\mathcal{B}_{m,l}^{\rho }:=\left\{ u\in S(m,l):E(u)=\beta _{m,l}^{\rho
}\right\}
\end{equation*}%
are both orbitally stable under the flow of (\ref{e1-1}).
\end{theorem}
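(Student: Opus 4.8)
\noindent\emph{Proof sketch (a plan).} The strategy is to produce $u^{(2)}$ as a global minimizer of $E$ on $S(m,l)$, to produce $u^{(1)}$ as a minimizer of $E$ on $S(m,l)\cap B_\rho$ for a carefully tuned $\rho$ and to argue that it lies strictly inside $B_\rho$ (so it is a free constrained critical point), and finally to deduce orbital stability from the precompactness of the associated minimizing sequences together with the conservation laws. The basic quantitative input is that, by the interpolation inequality $\|u\|_4^4\le\|u\|_2\|u\|_6^3$ in $\mathbb R^3$ and Young's inequality,
\begin{equation*}
E(u)\ \ge\ \|u\|_{\dot\Sigma}^2-\tfrac12 m^{1/2}\|u\|_6^3+\tfrac{\mu}{3}\|u\|_6^6\ \ge\ \|u\|_{\dot\Sigma}^2-\tfrac{3m}{16\mu}\qquad\text{on }S(m,l),
\end{equation*}
so that $\alpha_{m,l}>-\infty$ and, on every sublevel set of $E|_{S(m,l)}$, the seminorm $\|\cdot\|_{\dot\Sigma}$ — hence (the mass being fixed) the norm $\|\cdot\|_{\Sigma}$ — is bounded. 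Since $V=\omega|x|^k$ with $k>2$ is confining, $\Sigma\hookrightarrow L^p(\mathbb R^3)$ is compact for $2\le p<6$; moreover, using $|x|^2\le\varepsilon|x|^k+C_\varepsilon$ and the tail bound $\int_{|x|>R}|x|^2|u_n|^2\le R^{2-k}\|x^{k/2}u_n\|_2^2$, any bounded sequence $u_n\rightharpoonup u$ in $\Sigma$ satisfies $xu_n\to xu$ strongly in $L^2$ (local Rellich plus tightness), whence $M(u_n)\to M(u)$ and, pairing the strong convergence of $xu_n$ against the weak convergence of $\nabla u_n$, $L(u_n)\to L(u)$. Consequently a minimizing sequence for $\alpha_{m,l}$ is bounded in $\Sigma$, converges along a subsequence weakly in $\Sigma$ and strongly in $L^2\cap L^4$ to some $u^{(2)}\in S(m,l)$, and by weak lower semicontinuity of $\|\cdot\|_{\dot\Sigma}^2$ and $\|\cdot\|_6^6$ and weak continuity of $\|\cdot\|_4^4$ one gets $E(u^{(2)})\le\liminf E(u_n)=\alpha_{m,l}$, hence equality; matching the three limits term by term then forces $\|u_n\|_{\dot\Sigma}\to\|u^{(2)}\|_{\dot\Sigma}$ and $\|u_n\|_6\to\|u^{(2)}\|_6$, so the convergence is strong in $\Sigma$ and every minimizing sequence is precompact. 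Finally $u^{(2)}$ solves (\ref{e1-3}) with Lagrange multipliers $\lambda,\Omega$ for the constraints $M$, $L$; in the only degenerate case, namely $u^{(2)}$ a single angular mode $g(r,z)e^{\mathrm{i}n\theta}$ (then necessarily $n=l/m$), one still obtains (\ref{e1-3})-(\ref{e1-4}) for some — no longer unique — pair $(\lambda,\Omega)$ by minimizing within that symmetry class.

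\smallskip
\noindent\emph{The local minimizer.} Set $\Lambda:=\Lambda(m,l):=\inf_{u\in S(m,l)}\|u\|_{\dot\Sigma}^2$; the same compactness yields $\Lambda>0$ and a minimizer $w$. For $\rho$ slightly above $\Lambda$ the set $S(m,l)\cap B_\rho$ is nonempty; using $\|u\|_4^4\le C_{GN}m^{1/2}\|\nabla u\|_2^3$ and $\|\nabla u\|_2^2\le2\|u\|_{\dot\Sigma}^2$ one controls the cubic term, so that $E\ge\rho-\sqrt2\,C_{GN}m^{1/2}\rho^{3/2}$ on $S(m,l)\cap\partial B_\rho$ and $E$ is bounded below by a positive multiple of $\Lambda$ on $S(m,l)\cap B_\rho$ as soon as $m\rho$ is small. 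The plan is then to choose $\rho$ and thresholds $m^*,\mu^*$ — using the behaviour of the scaling fibers $\tau\mapsto E(u_\tau)$, $u_\tau(x)=\tau^{3/2}u(\tau x)$, which preserve $S(m,l)$ and develop a second, deeper well once the focusing cubic is allowed to dominate (this is where $\mu$ must be taken small, suitably relative to $m$), together with a sharp enough control of $\Lambda(m,l)$ — so that for $m<m^*$ and $\mu<\mu^*$
\begin{equation*}
\alpha_{m,l}\ <\ \beta_{m,l}^{\rho}\ <\ \inf\big\{E(u):u\in S(m,l),\ \|u\|_{\dot\Sigma}^2=\rho\big\}.
\end{equation*}
Granting this, any minimizing sequence for $\beta_{m,l}^{\rho}$ lies in $B_\rho$, is bounded, and — exactly as above — has a subsequence converging strongly in $\Sigma$ to $u^{(1)}\in S(m,l)\cap\overline{B_\rho}$ with $E(u^{(1)})=\beta_{m,l}^{\rho}$; the strict inequality on $\partial B_\rho$ forces $\|u^{(1)}\|_{\dot\Sigma}^2<\rho$, so $u^{(1)}$ is an interior, hence free, critical point of $E|_{S(m,l)}$ and solves (\ref{e1-3})-(\ref{e1-4}) as above; and $\alpha_{m,l}<\beta_{m,l}^{\rho}$ shows $u^{(1)}\neq u^{(2)}$.

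\smallskip
\noindent\emph{Orbital stability.} This is the classical Cazenave--Lions argument: $M$, $E$ and $L$ are conserved along the flow of (\ref{e1-1}) (recall $L_zV=0$), are continuous on $\Sigma$, local well-posedness is assumed, and $\|u\|_{\dot\Sigma}^2\le E(u)+\tfrac{3m}{16\mu}$ keeps the $\Sigma$-norm of solutions near the minimizer sets bounded, so those solutions are global. For $\mathcal A_{m,l}$ one invokes directly the precompactness of minimizing sequences for $\alpha_{m,l}$ and the standard contradiction argument. For $\mathcal B_{m,l}^{\rho}$ one additionally uses the energy barrier $\beta_{m,l}^{\rho}<\inf_{\partial B_\rho}E$: if $\psi_0$ is close to $\mathcal B_{m,l}^{\rho}$, then $E(\psi_0)$ is close to $\beta_{m,l}^{\rho}$ and $\|\psi_0\|_{\dot\Sigma}^2<\rho$, and since $E$ and the constraints are conserved, $\psi(t)\in S(m,l)$ for all $t$ while $\|\psi(t)\|_{\dot\Sigma}^2$ can never reach $\rho$ (otherwise $E(\psi_0)=E(\psi(t))\ge\inf_{\partial B_\rho}E>\beta_{m,l}^{\rho}$, a contradiction); hence $\psi(t)$ stays a minimizing family for $\beta_{m,l}^{\rho}$ inside $B_\rho$, to which the precompactness applies.

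\smallskip
\noindent\emph{Main obstacle.} The compactness (bounded-below energy, confining potential, passage of $M$ and $L$ to the weak limit, strong recovery via term-by-term matching) and the stability bootstrap are essentially routine; in particular the energy-critical defocusing term is harmless here, since only lower semicontinuity of $\|\cdot\|_6$ is ever used and it simultaneously rules out blow-up. The genuine difficulty — and the place where $m<m^*$, $\mu<\mu^*$ really enter — is the coordinated choice of $\rho$ and of the thresholds that makes the displayed chain of inequalities hold: one must control $\Lambda(m,l)$ sharply, and exploit the smallness of $\mu$ (relative to $m$) both to suppress the defocusing term in the low-seminorm regime and to let the focusing cubic dig a well below the level of the local minimum, so that $\beta_{m,l}^{\rho}$ is attained strictly inside $B_\rho$ and strictly above $\alpha_{m,l}$. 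The possible degeneracy of $(M',L')$ at a pure-mode critical point is only a minor technical wrinkle, affecting the uniqueness but not the existence of the multipliers.
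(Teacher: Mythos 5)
Your overall architecture coincides with the paper's: coercivity of $E$ on $S(m,l)$ via the interpolation $\|u\|_4^4\le\|u\|_2\|u\|_6^3$ (the paper's Lemma \ref{L3-1} is exactly your $E\ge\|u\|_{\dot\Sigma}^2-\tfrac{3m}{16\mu}$ bound in disguise), compactness of $\Sigma\hookrightarrow L^q$ plus a tail estimate $\int_{|x|>R}|x|^2|u_n|^2\le R^{2-k}\|x^{k/2}u_n\|_2^2$ to pass $M$ and $L$ to the weak limit, term-by-term matching for strong $\Sigma$-convergence, scaling $\tau^{3/2}u(\tau x)$ to force $\alpha_{m,l}<0$, and the Cazenave--Lions contradiction argument with the $\partial B_\rho$ energy barrier for $\mathcal B^\rho_{m,l}$. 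Your remark on the possible degeneracy of $(M',L')$ is in fact more careful than the paper, which only confronts this issue later in the Palais--Smale analysis.

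However, there is one genuine gap: the displayed chain $\alpha_{m,l}<\beta^\rho_{m,l}<\inf\{E(u):u\in S(m,l),\,\|u\|_{\dot\Sigma}^2=\rho\}$ is announced as ``the plan'' and then used under ``granting this,'' but it is never established, and the whole theorem hinges on it. The paper closes it in Lemma \ref{L2-5} by fixing an arbitrary $\rho>0$ \emph{first} and then shrinking $m$ and $\mu$: on the annulus $B_\rho\setminus B_{a\rho}$ Gagliardo--Nirenberg gives $E\ge a\rho-\tfrac{\mathcal S_4}{2}\sqrt m\,\rho^{3/2}$, while on the inner ball $B_{b\rho}$ one drops the negative quartic term and uses Sobolev on the quintic to get $E\le b\rho+\tfrac{2\mu S}{3}(b\rho)^3$; choosing $m,\mu$ so that $\tfrac{\mathcal S_4}{2}\sqrt m\,\rho^{3/2}+\tfrac{2\mu S}{3}(b\rho)^3<(a-b)\rho$ yields the gap, and nonemptiness of $S(m,l)\cap B_{b\rho}$ for small $m$ comes from the isometric identification of $S(m,l)$ with a sequence space (\cite[Lemma 2.2]{NS}). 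Note that this is where $\mu<\mu^*$ enters already for the \emph{local} minimizer (to tame the defocusing quintic near $\partial B_{b\rho}$), not only for digging the global well. Your alternative route via $\Lambda(m,l):=\inf_{S(m,l)}\|u\|_{\dot\Sigma}^2$ and ``$\rho$ slightly above $\Lambda$'' would additionally require a quantitative control of $\Lambda(m,l)$ that you do not supply; the paper's order of quantifiers ($\rho$ fixed, then $m^*(\rho),\mu^*$) avoids this entirely and is the simpler way to finish.
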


\begin{remark}
The operator $H$ defined on $C_{0}^{\infty}(\mathbb{R}^{3})$ is essentially
self-adjoint on $L^{2}(\mathbb{R}^{3})$, however, we remain uncertain as to whether Eq. (\ref{e1-1}) is locally well-posed. In \cite{YZ}, for single power term $|u|^{p-2}u$, when $2<p<4+\frac{4}{k}$, the local well-posedness can be obtained by applying
Strichartz estimates.
\end{remark}

\begin{theorem}
\label{t2} Let $\mu<\mu_{\ast}$. Then there exist $m_{\ast}\leq m^{\ast}$
and $\tilde{\omega}>0$ such that for $m<m_{\ast}$ and $\omega<\tilde{\omega}$%
, there admits a mountain pass type solution $u^{(3)}$ for problem (\ref%
{e1-3})-(\ref{e1-4}).
\end{theorem}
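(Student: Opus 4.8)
The plan is to produce $u^{(3)}$ as a mountain pass critical point of $E$ restricted to the $C^{1}$-manifold $S(m,l)$, lying between the two minimizers $u^{(1)}$ and $u^{(2)}$ of Theorem \ref{t1}; the structural facts I would lean on are that, for $m,\mu,\omega$ small, the constrained energy has a genuine two-well shape and is both coercive and Palais--Smale compact at every level. First I would read off from (the proof of) Theorem \ref{t1} the two-well picture: $u^{(1)}$ lies in the interior of $B_{\rho}$ with $E(u^{(1)})=\beta_{m,l}^{\rho}$, the global minimizer $u^{(2)}$ satisfies $\Vert u^{(2)}\Vert_{\dot\Sigma}^{2}>\rho$ and $\alpha_{m,l}=E(u^{(2)})<\beta_{m,l}^{\rho}$, and there is a strict barrier $c_{\rho}:=\inf\{E(u):u\in S(m,l),\ \Vert u\Vert_{\dot\Sigma}^{2}=\rho\}>\beta_{m,l}^{\rho}$. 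The mechanism behind all this is the mass- and angular-momentum-preserving dilation $u_{t}(x):=t^{3/2}u(tx)$, which keeps $S(m,l)$ invariant and along which
\begin{equation*}
E(u_{t})=\frac{t^{2}}{2}\Vert\nabla u\Vert_{2}^{2}+\omega t^{-k}\Vert x^{k/2}u\Vert_{2}^{2}-\frac{t^{3}}{2}\Vert u\Vert_{4}^{4}+\frac{\mu t^{6}}{3}\Vert u\Vert_{6}^{6},
\end{equation*}
a function that blows up as $t\to0^{+}$ and as $t\to\infty$ and, for $\mu$ small, displays a first local minimum, a local maximum and a global minimum -- the embryos of $u^{(1)}$, $u^{(3)}$ and $u^{(2)}$.

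With the barrier in hand I would set $\Gamma:=\{\eta\in C([0,1],S(m,l)):\eta(0)=u^{(1)},\ \eta(1)=u^{(2)}\}$ and $\gamma_{m,l}:=\inf_{\eta\in\Gamma}\max_{[0,1]}E\circ\eta$. Since $S(m,l)$ is a connected $C^{1}$-submanifold of $\Sigma$ one has $\Gamma\neq\varnothing$, and -- this is the ``new mountain pass path'' announced in the introduction -- one can exhibit an explicit admissible $\eta$ (e.g. dilate $u^{(1)}$ along its fiber, then join the resulting function to $u^{(2)}$ inside $S(m,l)$) with finite maximal energy, so $\gamma_{m,l}<\infty$. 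Since $u^{(1)}$ lies inside $B_{\rho}$ and $u^{(2)}$ outside $\overline{B_{\rho}}$, every $\eta\in\Gamma$ crosses the level set $\{\Vert u\Vert_{\dot\Sigma}^{2}=\rho\}$, whence $\gamma_{m,l}\geq c_{\rho}>\beta_{m,l}^{\rho}=\max\{E(u^{(1)}),E(u^{(2)})\}$. This is the strict mountain pass geometry, and it already supplies the geometric link and the energy ordering $\alpha_{m,l}\leq\beta_{m,l}^{\rho}<\gamma_{m,l}$.

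Next I would settle compactness. From $\Vert u\Vert_{4}^{4}\leq m^{1/2}\Vert u\Vert_{6}^{3}$, absorbed into $\frac{\mu}{3}\Vert u\Vert_{6}^{6}$, one gets $E(u)\geq\Vert u\Vert_{\dot\Sigma}^{2}-C(m,\mu)$ on $S(m,l)$, so $E|_{S(m,l)}$ is coercive and every Palais--Smale sequence is bounded in $\Sigma$. For $u_{n}\in S(m,l)$ with $E(u_{n})\to\gamma_{m,l}$ and $E'(u_{n})-\lambda_{n}M'(u_{n})-\Omega_{n}L'(u_{n})\to0$ in $\Sigma^{-1}$, boundedness of $u_{n}$ together with the non-degeneracy of the Gram matrix of the two constraints yields bounded $\lambda_{n},\Omega_{n}$, and $u_{n}\rightharpoonup u_{\ast}$ in $\Sigma$. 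Since the trapping potential makes $\Sigma\hookrightarrow L^{p}(\mathbb{R}^{3})$ compact for $2\leq p<6$ and $\Sigma\hookrightarrow L^{2}(|x_{\perp}|^{2}dx)$ compact, $u_{n}\to u_{\ast}$ in $L^{2}$ and $L^{4}$, so $u_{\ast}\in S(m,l)$ and $\lambda_{n}\to\lambda$, $\Omega_{n}\to\Omega$. Testing the approximate equation against $u_{n}-u_{\ast}$ and applying Brezis--Lieb to $\Vert\nabla u_{n}\Vert_{2}^{2}$ and $\Vert u_{n}\Vert_{6}^{6}$, while the $L^{2}$-, $L^{4}$- and $L_{z}$-terms pass to the limit, everything collapses to
\begin{equation*}
\frac{1}{2}\Vert\nabla(u_{n}-u_{\ast})\Vert_{2}^{2}+\omega\Vert x^{k/2}(u_{n}-u_{\ast})\Vert_{2}^{2}+\mu\Vert u_{n}-u_{\ast}\Vert_{6}^{6}=o(1),
\end{equation*}
a sum of nonnegative terms; hence $u_{n}\to u_{\ast}$ in $\Sigma$, and $E|_{S(m,l)}$ satisfies $(PS)$ at every level -- the \emph{defocusing} quintic is an ally, the only possibly non-compact term carrying the favourable sign. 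The mountain pass theorem on the complete $C^{1}$-manifold $S(m,l)$ then yields a critical point $u^{(3)}\in S(m,l)$ with $E(u^{(3)})=\gamma_{m,l}$, i.e. a solution of (\ref{e1-3})--(\ref{e1-4}) for suitable $\lambda,\Omega$; and $\gamma_{m,l}>\beta_{m,l}^{\rho}\geq\alpha_{m,l}$ forces $u^{(3)}\neq u^{(1)},u^{(2)}$. Choosing $m_{\ast}\leq m^{\ast}$ and $\tilde\omega$ small enough to support the geometry completes the proof.

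The hard part will be the geometry of the first two paragraphs: showing, uniformly in the small parameters $m,\mu,\omega$, that $\alpha_{m,l}<\beta_{m,l}^{\rho}$ with the global minimizer strictly outside $B_{\rho}$ and that $\{\Vert u\Vert_{\dot\Sigma}^{2}=\rho\}$ is a strict barrier above $\beta_{m,l}^{\rho}$, and then engineering the admissible connecting path with controlled maximal energy (the ``new mountain pass path''). By contrast the compactness step, deceptively dangerous because $6=2^{\ast}$ in dimension three, is the easy part -- precisely because the quintic nonlinearity is defocusing.
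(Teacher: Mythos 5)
Your overall strategy (two-well geometry, a mountain pass between the two minimizers, compactness driven by the defocusing quintic) is the right one, and your compactness paragraph is essentially sound --- if anything more detailed than the paper's Lemma \ref{L4-1}, which likewise extracts bounded multipliers $\lambda_{n},\Omega_{n}$ and exploits the compact embeddings together with the favourable sign of the quintic term (note the paper only claims $(PS)_{c}$ for $c\neq 0$, which is harmless since $\gamma(m,l)>0$). The genuine gap is at the very first step of the min--max construction: you take $\Gamma=\{\eta:\eta(0)=u^{(1)},\ \eta(1)=u^{(2)}\}$ and justify $\Gamma\neq\emptyset$ by asserting that $S(m,l)$ is connected, or by ``dilate $u^{(1)}$ along its fiber, then join the resulting function to $u^{(2)}$ inside $S(m,l)$''. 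Neither is proved and neither is obvious: $S(m,l)$ is the intersection of the $L^{2}$-sphere with a level set of the indefinite quadratic form $L$, the two minimizers do \emph{not} lie on a common dilation fiber, and path-connectedness of $S(m,l)$ is exactly the point you cannot wave away. This is precisely why the paper builds a different path: it places \emph{both} endpoints on the dilation fiber of the global minimizer, $g(0)=(u^{(2)})_{l_{1}}=l_{1}^{3/2}u^{(2)}(l_{1}\cdot)$ and $g(1)=u^{(2)}$, so that the explicit reparametrized dilation $g(t)=(t+l_{1}(1-t))^{3/2}u^{(2)}((t+l_{1}(1-t))x)$ witnesses $\Gamma(m,l)\neq\emptyset$. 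The price is that one must be able to push $(u^{(2)})_{l}$ into $B_{\rho/2}$ for some $l$; since $\inf_{l>0}\Vert (u^{(2)})_{l}\Vert_{\dot{\Sigma}}^{2}$ is of order $\omega^{2/(k+2)}$, this is exactly where the hypothesis $\omega<\tilde{\omega}$ enters. Your write-up never actually uses $\omega<\tilde{\omega}$, which is the symptom of the missing step.

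To repair your version you would either have to prove path-connectedness of $S(m,l)$ (say via the angular-momentum eigenfunction decomposition of \cite{NS}, which this paper does not do), or follow \cite{GS} and replace the terminal point $u^{(2)}$ by a far dilate $\tau^{3/2}u^{(1)}(\tau\cdot)$ of $u^{(1)}$ with negative energy, which does lie on the fiber of $u^{(1)}$ and makes the path class nonempty without any condition on $\omega$ --- but then you lose the feature that the path terminates at the global minimizer, which is what the paper's construction buys in Corollary \ref{c}. A minor further point: the quantitative lower bound for the mountain pass level should be run on the annulus $B_{\rho}\setminus B_{\rho/2}$ crossed by the path, as in (\ref{e2-6}), rather than only on the sphere $\partial B_{\rho}$, although both give a bound strictly above $\max\{E(g(0)),E(g(1))\}$.
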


\begin{remark}
\label{r1}It is noted that in Theorem \ref{t2}, we need to control the
frequency $\omega $. This condition is intended to construct a special
mountain pass geometric structure, so as to obtain the precise geometric
positions of the three solutions (see Corollary \ref{c} below). If only the
mountain pass solution is obtained, this condition can be omitted, and the
proof is similar to that in \cite{GS}, in which the first local minimum is
selected as the starting point of the mountain pass path.
\end{remark}

\begin{corollary}
\label{c} Let $\omega<\tilde{\omega}$, $m<m_{\ast}$ and $\mu<\mu_{\ast}$,
then there exist three solutions $u^{(i)}$ for $i=1,2,3$, and
\begin{equation*}
E(u^{(2)})<0<E(u^{(1)})<E(u^{(3)}).
\end{equation*}
Moreover, there holds
\begin{equation*}
\|u^{(1)}\|_{\dot{\Sigma}}^{2}<\|u^{(3)}\|_{\dot{\Sigma}}^{2}<\|u^{(2)}\|_{%
\dot{\Sigma}}^{2}.
\end{equation*}
\end{corollary}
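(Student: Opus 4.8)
The plan is to read off Corollary~\ref{c} as the joint consequence of Theorems~\ref{t1} and~\ref{t2}, supplemented by two sign estimates and one norm comparison. First I would fix $\omega<\tilde{\omega}$, $m<m_{\ast}\le m^{\ast}$ and $\mu<\mu_{\ast}$; in this range all hypotheses of both theorems hold, so the three solutions $u^{(1)},u^{(2)},u^{(3)}$ coexist, with $E(u^{(1)})=\beta_{m,l}^{\rho}$, $E(u^{(2)})=\alpha_{m,l}$, and $E(u^{(3)})=c_{m,l}$, where $c_{m,l}$ denotes the mountain pass level produced along the path built in the proof of Theorem~\ref{t2}.

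Second, the energy signs. To get $\alpha_{m,l}<0$ I would test $E$ on the scaling family $u_{t}(x)=t^{3/2}u(tx)$, which preserves both $M$ and $L$, so that $E(u_{t})=\tfrac{t^{2}}{2}\|\nabla u\|_{2}^{2}+\omega t^{-k}\|x^{k/2}u\|_{2}^{2}-\tfrac{t^{3}}{2}\|u\|_{4}^{4}+\tfrac{\mu t^{6}}{3}\|u\|_{6}^{6}$; for $\omega<\tilde{\omega}$ and $\mu<\mu_{\ast}$ the trapping and quintic contributions are negligible on an interval of scales where the focusing cubic term dominates, whence $E(u_{t})<0$ for a suitable $t$ and thus $\alpha_{m,l}<0$. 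To get $\beta_{m,l}^{\rho}>0$ I would use that the angular momentum constraint forces a uniform lower bound $\|u\|_{\dot{\Sigma}}^{2}\ge\delta_{0}>0$ on $S(m,l)$ — coming from $|l|=|L(u)|\le\|x_{\perp}u\|_{2}\|\nabla u\|_{2}$ with $x_{\perp}=(x_{1},x_{2})$, together with H\"{o}lder's inequality $\|x_{\perp}u\|_{2}^{2}\le(\|x^{k/2}u\|_{2}^{2})^{2/k}m^{1-2/k}$ and the definition of $\|\cdot\|_{\dot{\Sigma}}$ — whereas on $S(m,l)\cap B_{\rho}$ the Gagliardo--Nirenberg bound $\|u\|_{4}^{4}\le C\|\nabla u\|_{2}^{3}m^{1/2}$ makes $\tfrac12\|u\|_{4}^{4}$ strictly smaller than $\|u\|_{\dot{\Sigma}}^{2}$ once $\rho$ lies in the admissible window, which is nonempty when $m$ is small; hence $E(u)\ge\|u\|_{\dot{\Sigma}}^{2}-\tfrac12\|u\|_{4}^{4}>0$ there. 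Finally $\beta_{m,l}^{\rho}<c_{m,l}$ is exactly the mountain pass geometry underlying Theorem~\ref{t2}: the strict barrier $\inf\{E(u):u\in S(m,l),\,\|u\|_{\dot{\Sigma}}^{2}=\rho\}>\beta_{m,l}^{\rho}$ forces every admissible path to pay at least that much energy, so $c_{m,l}\ge\inf\{E(u):u\in S(m,l),\,\|u\|_{\dot{\Sigma}}^{2}=\rho\}>\beta_{m,l}^{\rho}=E(u^{(1)})$.

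Third, the norm ordering. Since $\beta_{m,l}^{\rho}$ is strictly below the energy on $\{u\in S(m,l):\|u\|_{\dot{\Sigma}}^{2}=\rho\}$, the minimizer of $E$ on the closed set $S(m,l)\cap B_{\rho}$ cannot lie on $\partial B_{\rho}$, giving $\|u^{(1)}\|_{\dot{\Sigma}}^{2}<\rho$; since $\alpha_{m,l}<0<\beta_{m,l}^{\rho}=\inf_{S(m,l)\cap B_{\rho}}E$ while the energy on $\partial B_{\rho}$ is positive, the global minimizer lies strictly outside $B_{\rho}$, giving $\|u^{(2)}\|_{\dot{\Sigma}}^{2}>\rho$. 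For the middle inequality I would use the explicit path $\gamma(t)=(u^{(2)})_{t}$ with $(u^{(2)})_{t}(x)=t^{3/2}u^{(2)}(tx)$: the map $t\mapsto\|(u^{(2)})_{t}\|_{\dot{\Sigma}}^{2}=\tfrac{t^{2}}{2}\|\nabla u^{(2)}\|_{2}^{2}+\omega t^{-k}\|x^{k/2}u^{(2)}\|_{2}^{2}$ has a unique minimum at some $t^{\ast}\in(0,1)$ — here $\omega<\tilde{\omega}$ both keeps $t^{\ast}<1$ and pushes that minimal value below $\rho$, so the path enters $B_{\rho}$ — is strictly increasing on $(t^{\ast},1)$, and equals $\|u^{(2)}\|_{\dot{\Sigma}}^{2}$ at $t=1$, while $E(\gamma(1))=\alpha_{m,l}<0$; hence $\max_{t}E(\gamma(t))$ is attained at some $t_{0}\in(t^{\ast},1)$ with $\|\gamma(t_{0})\|_{\dot{\Sigma}}^{2}<\|u^{(2)}\|_{\dot{\Sigma}}^{2}$. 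Because the minimax scheme delivers $u^{(3)}$ at level $c_{m,l}\le\max_{t}E(\gamma(t))$ as a limit of configurations that the deformation argument confines to the annulus $\rho\le\|u\|_{\dot{\Sigma}}^{2}\le\|\gamma(t_{0})\|_{\dot{\Sigma}}^{2}$, one obtains $\rho\le\|u^{(3)}\|_{\dot{\Sigma}}^{2}<\|u^{(2)}\|_{\dot{\Sigma}}^{2}$; combined with $\|u^{(1)}\|_{\dot{\Sigma}}^{2}<\rho$ this closes the chain.

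The main obstacle is the middle norm inequality, and inside it the lower bound $\|u^{(3)}\|_{\dot{\Sigma}}^{2}\ge\rho$: a priori a mountain pass critical point could hide in $B_{\rho}$, so one needs either the crude bound $\sup_{S(m,l)\cap B_{\rho}}E\le\rho+C\mu\rho^{3}$ (from the Sobolev inequality $\|u\|_{6}^{6}\le C\|\nabla u\|_{2}^{6}$) to be strictly below the barrier level, hence below $c_{m,l}$, or a localized deformation keeping the constructed Palais--Smale sequence outside $B_{\rho}$. Making the smallness of $\omega$ simultaneously (i) produce a test function with negative energy, (ii) let the scaling path of $u^{(2)}$ dip into $B_{\rho}$, and (iii) keep $\sup_{B_{\rho}}E$ below $c_{m,l}$ — that is, calibrating $\rho$, $m_{\ast}$, $\mu_{\ast}$, $\tilde{\omega}$ consistently with the choices made in Theorems~\ref{t1}--\ref{t2} — is where the real work lies; the energy signs and the two outer norm inequalities are comparatively routine once the variational framework of those theorems is in place.
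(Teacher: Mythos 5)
Your assembly of the energy-level chain and of the two outer norm inequalities is sound and consistent with the lemmas of Sections 3--4: $E(u^{(2)})=\alpha_{m,l}<0$ is Lemma \ref{L3-2}; $E(u^{(1)})=\beta_{m,l}^{\rho}>0$ follows either from your angular-momentum lower bound on $\Vert u\Vert_{\dot\Sigma}$ over $S(m,l)$ or, more cheaply, from the attainment of the infimum at a nonzero $u^{(1)}$ together with $E(u)\geq (1-\sqrt{2}\,\mathcal{S}_{4}\sqrt{m\rho}\,)\Vert u\Vert_{\dot\Sigma}^{2}$ on $S(m,l)\cap B_{\rho}$; $\beta_{m,l}^{\rho}<\gamma(m,l)$ follows since every path in $\Gamma(m,l)$ starts in $B_{\rho/2}$, ends at $u^{(2)}\notin B_{\rho}$, hence crosses the annulus where Lemma \ref{L2-5} gives a barrier above $\inf_{S(m,l)\cap B_{b\rho}}E\geq\beta_{m,l}^{\rho}$; and $\Vert u^{(1)}\Vert_{\dot\Sigma}^{2}<\rho<\Vert u^{(2)}\Vert_{\dot\Sigma}^{2}$ is exactly as you argue. (The paper states the corollary with no proof, so on these points your write-up is more complete than the source.)

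The genuine gap is the inequality $\Vert u^{(3)}\Vert_{\dot\Sigma}^{2}<\Vert u^{(2)}\Vert_{\dot\Sigma}^{2}$. Your argument rests on the assertion that the deformation scheme confines the Palais--Smale sequence to $\{\rho\leq\Vert u\Vert_{\dot\Sigma}^{2}\leq\Vert\gamma(t_{0})\Vert_{\dot\Sigma}^{2}\}$. Lemma \ref{L2-8} is a global deformation of $S(m,l)$ with no control on $\Vert\cdot\Vert_{\dot\Sigma}$; the fact that the maximum of $E$ along one particular path occurs at a point of norm below $\Vert u^{(2)}\Vert_{\dot\Sigma}^{2}$ only bounds the \emph{level} $\gamma(m,l)$ from above and says nothing about where the critical point lives. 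The only a priori upper bound is the coercivity estimate $\Vert u^{(3)}\Vert_{\dot\Sigma}^{2}\leq\gamma(m,l)+\tfrac{3m}{16\mu}$, and there is no matching lower bound on $\Vert u^{(2)}\Vert_{\dot\Sigma}^{2}$ beyond $\rho$, so the claim is not established; one would need either such quantitative bounds or a genuinely localized minimax (a deformation restricted to a norm ball whose boundary is energetically repulsive at level $\gamma(m,l)$), neither of which you --- or the paper --- supply. A secondary, fixable defect: your fallback for the other half, requiring $\sup_{S(m,l)\cap B_{\rho}}E$ to lie strictly below the barrier level on $\partial B_{\rho}$, is impossible, since $\partial B_{\rho}\subset B_{\rho}$ forces $\sup_{S(m,l)\cap B_{\rho}}E\geq\inf_{S(m,l)\cap\partial B_{\rho}}E$. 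The correct version uses the smaller ball: Lemma \ref{L2-5} places $u^{(1)}$ in $B_{a\rho}$ with $a<1$, and $\sup_{S(m,l)\cap B_{a\rho}}E\leq a\rho+\tfrac{8\mu S}{3}(a\rho)^{3}$ is strictly below the lower bound for $\gamma(m,l)$ obtained from the annulus crossing when $a$, $m$, $\mu$ are suitably small, whence $u^{(3)}\notin B_{a\rho}$ and $\Vert u^{(1)}\Vert_{\dot\Sigma}^{2}<\Vert u^{(3)}\Vert_{\dot\Sigma}^{2}$.
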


Let us present the main difficulties and research methods of this paper.
First, we restrict the energy functional $E$ to the ball $S(m,l)\cap
B_{\rho} $ and consider the local minimization problem $\beta_{m,r}^{\rho}$.
Then, we show that the compactness of any minimizing sequence holds and
local minimizer is attained. Here, it is worth mentioning that the
verification of compactness is complicated due to the constraint of angular
momentum conservation and the presence of critical nonlinear term. In
addition, to ensure that the local minimizer is attained in the interior of
the ball $S(m,l)\cap B_{\rho}$, we need to rule out the case where it lies
on the boundary. However, the presence of critical term makes it impossible
to achieve this merely by controlling the mass, which is different from \cite%
{GS}. With the help of local minimizer, we can find global minimizer with
negative energy level through the method of asymptotic analysis. Finally, we
construct a mountain pass solution by establishing the mountain pass
structure of the functional $E$. This method of constructing mountain pass
solution has two main differences from that in \cite{GS}. $(i)$ To obtain
more information about the solutions, we construct a new mountain pass path,
with the global minimizer serving as the endpoint of this path, that is
\begin{equation*}
\gamma(m,l):=\inf_{g\in\Gamma(m,l)}\max_{0\leq t\leq1}E(g(t)),
\end{equation*}
where
\begin{equation*}
\Gamma(m,l):=\left\{g\in C([0,1],S(m,l)):g(0)=(u^{(2)})_{l_{1}},
g(1)=u^{(2)}\right\}.
\end{equation*}
Here, we can find $l_{1}>0$ such that $(u^{(2)})_{l_{1}}\in S(m,l)\cap
B_{\rho}$. In this way, both the energy level and the geometric link can be clearly demonstrated. $(ii)$ we do not need the
Pohozaev identity to ensure the boundedness of the sequence, so the method
of constructing the sequence is simpler.

\begin{remark}
\label{r2}In this paper, we restrict the potential function to polyharmonic
functions. In fact, the conditions on the potential can be more general, see
below assumption in \cite{NS,GS}.\newline
$(V1)$ The potential $V\in C^{\infty }(\mathbb{R}^{3},\mathbb{R})$ is
assumed to be radially symmetric and confining, i.e. $V(x)\rightarrow
+\infty $ as $|x|\rightarrow \infty $. Moreover, there exists $k>2$ and $R>0$%
, such that for $|x|>R$:
\begin{equation*}
c_{\alpha }\left\langle x\right\rangle ^{k-|\alpha |}\leq |\partial ^{\alpha
}V(x)|\leq C_{\alpha }\left\langle x\right\rangle ^{k-|\alpha |},\,\forall
\alpha \in \mathbb{N}^{3},
\end{equation*}%
where $\left\langle x\right\rangle =(1+|x|^{2})^{1/2}$ and $c_{\alpha
},C_{\alpha }$ are positive constants. Furthermore, the Pohozaev identity is
not required to obtain the mountain pass type solution, which allows the
conditions $\left\langle \nabla V(x),x\right\rangle >0$ in \cite{GS} to be
eliminated.
\end{remark}

The rest of the paper is structured as follows. Some preliminaries for
analysis will be given in Section 2. The existence of bound states will be
established in Sections 3 and 4.

\section{Preliminary}

In what follows, we introduce the well-known Gagliardo-Nirenberg inequality
and Sobolev inequality (see \cite{T,W}).

\begin{lemma}[Gagliardo-Nirenberg inequality]
\label{L2-1} Let $q\in (2, 6)$. Then there exists a sharp constant $\mathcal{%
S}_{q}>0$ such that
\begin{equation}
\Vert u\Vert _{q}\leq \mathcal{S}_{q}^{1/q}\Vert \nabla u\Vert _{2}^{\frac{%
3(q-2)}{2q}}\Vert u\Vert _{2}^{\frac{6-q}{2q}},  \label{e2-1}
\end{equation}%
where $\mathcal{S}_{q}=\frac{q}{2\|U_{q}\|_{2}^{q-2}}$, and $U_{q}$ is the
ground state solution of the following equation
\begin{equation*}
-\Delta U+\frac{6-q}{3(q-2)}U=\frac{4}{3(q-2)}|U|^{q-2}U.
\end{equation*}
\end{lemma}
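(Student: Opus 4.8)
The plan is to obtain the inequality together with the closed form of the sharp constant by minimising the associated Weinstein quotient. First I would note that the exponents are forced by scaling: under the two-parameter family $u\mapsto u_{\mu,\lambda}(x):=\mu\,u(\lambda x)$ the quotient
$$W(u):=\frac{\|\nabla u\|_{2}^{\frac{3(q-2)}{2}}\,\|u\|_{2}^{\frac{6-q}{2}}}{\|u\|_{q}^{q}}$$
is invariant, and, after raising (\ref{e2-1}) to the $q$-th power, $\mathcal{S}_{q}=\big(\inf_{u\in H^{1}\setminus\{0\}}W(u)\big)^{-1}$ is exactly the best constant. That this infimum is positive, i.e. that (\ref{e2-1}) holds with \emph{some} finite constant, follows at once from H\"older interpolation $\|u\|_{q}\le\|u\|_{2}^{1-\theta}\|u\|_{6}^{\theta}$ with $\theta=\frac{3(q-2)}{2q}$ together with the Sobolev embedding $\dot{H}^{1}(\mathbb{R}^{3})\hookrightarrow L^{6}(\mathbb{R}^{3})$; the restriction $q\in(2,6)$ is precisely what makes $\theta\in(0,1)$ and all the powers appearing in $W$ positive.

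Next I would show the infimum is attained. Replacing a minimising sequence by its Schwarz symmetric-decreasing rearrangement leaves $\|u\|_{2}$ and $\|u\|_{q}$ unchanged and does not increase $\|\nabla u\|_{2}$ (P\'olya--Szeg\H{o}), so the sequence may be taken radial and nonincreasing; using the two-parameter scaling I would then normalise $\|\nabla u_{n}\|_{2}=\|u_{n}\|_{2}=1$. The sequence is bounded in $H^{1}_{\mathrm{rad}}(\mathbb{R}^{3})$, which embeds compactly into $L^{q}(\mathbb{R}^{3})$ for $q\in(2,6)$, so along a subsequence $u_{n}\to U$ strongly in $L^{q}$ with $U\neq 0$, and weak lower semicontinuity of the $H^{1}$ norm shows that $U$ realises the infimum. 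Writing the Euler--Lagrange equation at $U$ and using the normalisations gives $-a\Delta U+bU=c\,|U|^{q-2}U$ with positive constants $a,b,c$ determined by the exponents; one finds $b/a=\frac{6-q}{3(q-2)}$, and after a rescaling of the amplitude the equation becomes the canonical one $-\Delta\widetilde{U}+\frac{6-q}{3(q-2)}\widetilde{U}=\frac{4}{3(q-2)}|\widetilde{U}|^{q-2}\widetilde{U}$. By the uniqueness of its positive radial solution (Kwong's theorem) $\widetilde{U}=U_{q}$, the ground state.

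It then remains to evaluate $\mathcal{S}_{q}=W(U_{q})^{-1}$ explicitly. Testing the equation for $U_{q}$ against $U_{q}$ itself and against $x\cdot\nabla U_{q}$ produces the Nehari and Pohozaev identities, i.e. two linear relations among $\|\nabla U_{q}\|_{2}^{2}$, $\|U_{q}\|_{2}^{2}$ and $\|U_{q}\|_{q}^{q}$; solving them yields $\|\nabla U_{q}\|_{2}^{2}=\|U_{q}\|_{2}^{2}$ and $\|U_{q}\|_{q}^{q}=\frac{q}{2}\|U_{q}\|_{2}^{2}$. Substituting into $W(U_{q})$ and collecting the powers (note $\frac{3(q-2)}{2}+\frac{6-q}{2}=q$) gives $W(U_{q})=\frac{2}{q}\|U_{q}\|_{2}^{q-2}$, hence $\mathcal{S}_{q}=\frac{q}{2\|U_{q}\|_{2}^{q-2}}$, with equality in (\ref{e2-1}) holding exactly along the orbit of $U_{q}$ under translations and the scaling $u_{\mu,\lambda}$.

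I expect the two genuinely nontrivial inputs to be the main obstacles: the attainment of the infimum, where the symmetrisation together with the compact radial Sobolev embedding does the work but one must still exclude vanishing after the normalisation; and, more substantially, the uniqueness of the positive radial solution of the limiting semilinear ODE, which is what identifies the optimiser with the ground state $U_{q}$ and thereby pins the constant down in closed form. The remaining computation via the Nehari and Pohozaev identities is routine.
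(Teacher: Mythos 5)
Your argument is correct and is precisely the classical Weinstein variational proof (minimisation of the scale-invariant quotient, symmetrisation plus the compact radial embedding, Kwong's uniqueness, and the Nehari--Pohozaev identities $\Vert \nabla U_q\Vert_2^2=\Vert U_q\Vert_2^2$ and $\Vert U_q\Vert_q^q=\tfrac{q}{2}\Vert U_q\Vert_2^2$, which I checked against the stated equation). The paper gives no proof of this lemma and simply cites \cite{T,W}; your write-up reproduces the argument of the cited reference \cite{W}, so there is nothing to compare beyond noting that your computation of $\mathcal{S}_q=\tfrac{q}{2\Vert U_q\Vert_2^{q-2}}$ matches the stated constant.
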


\begin{lemma}[Sobolev inequality]
\label{L2-2} There exists an optimal constant $S>0$ depending only on
dimension such that
\begin{equation}
\left\Vert u\right\Vert _{6}^{6}\leq S\left\Vert \nabla u\right\Vert
_{2}^{6},\quad \forall u\in D^{1,2}(\mathbb{R}^{3}),  \label{e2-2}
\end{equation}%
where $D^{1,2}(\mathbb{R}^{3})$ denotes the completion of $C_{0}^{\infty }(%
\mathbb{R}^{3})$ with respect to the norm $\left\Vert u\right\Vert
_{D^{1,2}}:=\left\Vert \nabla u\right\Vert _{2}$. It is well known \cite{T}
that the optimal constant is achieved by
\begin{equation*}
U_{\varepsilon ,y}\left( x\right) =3^{\frac{1}{4}}\left( \frac{\varepsilon }{%
\varepsilon ^{2}+\left\vert x-y\right\vert ^{2}}\right) ^{\frac{1}{2}}\quad
\text{for }\varepsilon >0,y\in \mathbb{R}^{3},
\end{equation*}%
which are the only positive classical solutions to the critical Lane-Emden
equation
\begin{equation*}
-\triangle \omega =\omega ^{5},\text{ }\omega >0\quad \text{in }\mathbb{R}%
^{3}.
\end{equation*}
\end{lemma}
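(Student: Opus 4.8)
The plan is to obtain \eqref{e2-2} together with the explicit extremals by studying the scale--invariant Rayleigh quotient
\[
S^{-1/6}\;=\;\inf_{u\in D^{1,2}(\mathbb R^{3})\setminus\{0\}}\frac{\|\nabla u\|_{2}}{\|u\|_{6}} .
\]
First I would recall that the embedding $D^{1,2}(\mathbb R^{3})\hookrightarrow L^{6}(\mathbb R^{3})$ is continuous (the Gagliardo--Nirenberg--Sobolev inequality), so that $S^{-1/6}$ is a finite positive number and the qualitative bound holds with $S:=(S^{-1/6})^{-6}$; the substance of the lemma is to evaluate $S$ and to describe the equality case. Next I would reduce to a one--dimensional problem by rearrangement: replacing $|u|$ by its Schwarz symmetrization $u^{*}$ keeps $\|u^{*}\|_{6}=\|u\|_{6}$ while $\|\nabla u^{*}\|_{2}\le\|\nabla u\|_{2}$ by the P\'olya--Szeg\H{o} inequality, with equality (for $u\ge 0$) only if $u$ is, up to translation, already radially symmetric and non--increasing. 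Hence the infimum is unchanged when restricted to nonnegative radial non--increasing functions $u=u(r)$, and on this class the quotient is invariant under the two--parameter family $u\mapsto\mu\,u(\lambda\,\cdot)$, which I will use to normalize minimizing sequences.

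For the radial profile I would then follow Talenti's route: rewriting the quotient in terms of the distribution function of $u$ turns the minimization into a sharp one--dimensional inequality that can be solved explicitly, showing equality is attained precisely by profiles of the form $c\,(\varepsilon^{2}+r^{2})^{-1/2}$. An equivalent, more PDE--flavoured path is to first show the infimum is attained (radial non--increasing minimizing sequences, after the normalization above, are precompact in $D^{1,2}$ modulo dilations, e.g.\ via Lions' concentration--compactness in the critical case), then write the Euler--Lagrange equation of a minimizer $U$, which after rescaling reads $-\Delta U=U^{5}$ with $U>0$, and invoke the classification of positive $D^{1,2}$--solutions of the critical Lane--Emden equation (moving planes / Kelvin transform, in the spirit of Gidas--Ni--Nirenberg and Caffarelli--Gidas--Spruck) to conclude that $U$ must be one of the Aubin--Talenti bubbles $U_{\varepsilon,y}$. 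A direct computation confirms $-\Delta U_{\varepsilon,y}=U_{\varepsilon,y}^{5}$, and by scale-- and translation--invariance the ratio $\|U_{\varepsilon,y}\|_{6}^{6}/\|\nabla U_{\varepsilon,y}\|_{2}^{6}$ is independent of $\varepsilon$ and $y$; this common value is the optimal $S$, a quantity depending only on the dimension.

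The main obstacle is the rigidity at the critical level: on either route, the genuinely nontrivial point is to preclude loss of compactness at the exponent $6$ (a minimizing sequence could, a priori, spread mass to infinity or concentrate) and, once a profile or solution is in hand, to show it is \emph{exactly} a bubble rather than merely a solution of $-\Delta U=U^{5}$ -- that is, the classification of extremals. In the one--dimensional reduction this rigidity is encoded in the equality case of the underlying 1D inequality; in the PDE approach it is supplied by the moving--plane argument. Everything else is scaling bookkeeping together with the explicit evaluation of the integrals of $U_{\varepsilon,y}$. As this is a classical fact, in the body of the paper it suffices to cite \cite{T,W}; the sketch above records how the stated form -- optimal constant plus explicit extremals -- is reached.
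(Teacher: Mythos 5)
Your sketch is a correct outline of the classical argument behind this lemma (continuity of the embedding, Schwarz symmetrization with P\'olya--Szeg\H{o}, Talenti's one-dimensional reduction or, alternatively, concentration--compactness plus the Gidas--Ni--Nirenberg / Caffarelli--Gidas--Spruck classification of positive solutions of $-\Delta U=U^{5}$), and the normalization $S^{-1/6}=\inf\|\nabla u\|_{2}/\|u\|_{6}$ is consistent with the form \eqref{e2-2}. This matches the paper, which gives no proof and simply cites \cite{T}; nothing further is required.
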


\begin{lemma}
(\cite[Lemma 3.1]{Z}) \label{L2-3} For $q\in\left[2,6\right)$, the embedding $%
\Sigma\hookrightarrow L^{q}(\mathbb{R}^{d},\mathbb{R})$ is compact.
\end{lemma}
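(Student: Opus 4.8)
The plan is to deduce compactness from two ingredients: the classical Rellich--Kondrachov theorem on bounded domains, and a uniform decay of mass at infinity forced by the confining weight $|x|^{k}$. So let $(u_{n})\subset\Sigma$ be bounded, say $\|u_{n}\|_{\Sigma}\le C$ for all $n$. Since $\Sigma$ is a Hilbert space, after passing to a subsequence we may assume $u_{n}\rightharpoonup u$ weakly in $\Sigma$; in particular $u_{n}\rightharpoonup u$ in $H^{1}(\mathbb{R}^{3})$ and $|x|^{k/2}u_{n}\rightharpoonup|x|^{k/2}u$ in $L^{2}(\mathbb{R}^{3})$, and $u$ inherits the same bounds by weak lower semicontinuity of the norm. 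By Rellich--Kondrachov, on every ball $B_{R}$ one has $u_{n}\to u$ strongly in $L^{q}(B_{R})$ for each $q\in[1,6)$.

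The crucial step is a tail estimate uniform in $n$, which is exactly where the confining potential enters (without it $\Sigma\hookrightarrow L^{2}$ would already fail to be compact). For $R>0$, on $\{|x|>R\}$ one has $|x|^{k}>R^{k}$, so
\begin{equation*}
\int_{|x|>R}|u_{n}|^{2}\,dx\le\frac{1}{R^{k}}\int_{|x|>R}|x|^{k}|u_{n}|^{2}\,dx\le\frac{C^{2}}{\omega R^{k}},
\end{equation*}
and likewise for $u$. For $q\in(2,6)$ I interpolate between $L^{2}$ and $L^{6}$: writing $\tfrac{1}{q}=\tfrac{1-\theta}{2}+\tfrac{\theta}{6}$ with $\theta\in(0,1)$, and using the Sobolev inequality \eqref{e2-2} together with $\|\nabla u_{n}\|_{2}^{2}\le 2\|u_{n}\|_{\Sigma}^{2}\le 2C^{2}$,
\begin{equation*}
\|u_{n}\|_{L^{q}(|x|>R)}\le\|u_{n}\|_{L^{2}(|x|>R)}^{1-\theta}\,\|u_{n}\|_{L^{6}(\mathbb{R}^{3})}^{\theta}\le\Big(\frac{C^{2}}{\omega R^{k}}\Big)^{\frac{1-\theta}{2}}\big(\sqrt{2}\,S^{1/6}C\big)^{\theta},
\end{equation*}
and the right-hand side tends to $0$ as $R\to\infty$, uniformly in $n$ (for $q=2$ this decay is already the previous display).

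It remains to patch the two regimes. Fix $\varepsilon>0$; choose $R$ so large that $\|u_{n}\|_{L^{q}(|x|>R)}+\|u\|_{L^{q}(|x|>R)}<\varepsilon$ for every $n$, and then, using $u_{n}\to u$ in $L^{q}(B_{R})$, choose $N$ with $\|u_{n}-u\|_{L^{q}(B_{R})}<\varepsilon$ for $n\ge N$. Splitting $\mathbb{R}^{3}=B_{R}\cup\{|x|>R\}$ gives $\|u_{n}-u\|_{q}<2\varepsilon$ for $n\ge N$, hence $u_{n}\to u$ in $L^{q}(\mathbb{R}^{3})$. Since every subsequence of the original bounded sequence admits a further subsequence converging in $L^{q}$ to the common weak limit, the embedding $\Sigma\hookrightarrow L^{q}(\mathbb{R}^{3})$ is compact.

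I expect the only genuinely delicate point to be the exclusion of the endpoint $q=6$: there the interpolation exponent $\theta$ degenerates to $1$, the tail estimate yields no gain, and Rellich--Kondrachov fails at the Sobolev-critical exponent — bubbles $U_{\varepsilon,y}$ concentrating at a point carry $L^{6}$-mass that escapes in the limit — so $q=6$ must genuinely be left out. (For general dimension $d$ the same argument gives compactness of $\Sigma\hookrightarrow L^{q}(\mathbb{R}^{d})$ for $q\in[2,2^{*})$ with $2^{*}=2d/(d-2)$, which for $d=3$ is precisely the stated range; this is also the structural reason the defocusing quintic term has to be handled separately elsewhere in the paper.)
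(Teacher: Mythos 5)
Your proof is correct and complete. Note that the paper itself does not prove this lemma at all — it is quoted verbatim from \cite[Lemma 3.1]{Z} — so there is no in-paper argument to compare against; what you have written is the standard (and surely the intended) proof: Rellich--Kondrachov on balls, the uniform tail bound $\int_{|x|>R}|u_{n}|^{2}\,dx\le C^{2}/(\omega R^{k})$ supplied by the confining weight, and $L^{2}$--$L^{6}$ interpolation with Sobolev control of the $L^{6}$ norm to cover $q\in(2,6)$. Your closing remark correctly identifies why $q=6$ must be excluded (the interpolation exponent degenerates and concentration bubbles defeat compactness at the critical exponent), which is consistent with how the paper treats the quintic term separately via Lemma \ref{L2-2} rather than through this compact embedding.
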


\begin{lemma}
(\cite[Proposition 3.2]{LY})\label{L2-6} If $u\in \Sigma $ weakly solves Eq. (%
\ref{e1-3}), then $u$ satisfies the Pohozaev identity
\begin{equation*}
Q(u)=\int_{\mathbb{R}^{3}}|\nabla u|^{2}dx-\omega k\int_{\mathbb{R}%
^{3}}|x|^{k}|u|^{2}dx+2\mu \int_{\mathbb{R}^{3}}|u|^{6}dx-\frac{3}{2}\int_{%
\mathbb{R}^{3}}|u|^{4}dx=0.
\end{equation*}
\end{lemma}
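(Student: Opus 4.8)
The plan is to prove this by the standard Pohozaev technique adapted to the confining potential: extract two scalar identities from the fact that $u$ is a critical point — one from testing (\ref{e1-3}) against the dilation field $x\cdot\nabla\bar u$, one from testing it against $\bar u$ — and eliminate the Lagrange multipliers between them. Concretely, since $u$ is a critical point of $E$ constrained to $S(m,l)$, there are $\lambda,\Omega\in\mathbb{R}$ with $E'(u)=\lambda M'(u)+\Omega L'(u)$, i.e. $u$ is a free critical point on $\Sigma$ of $J:=E-\lambda M-\Omega L$, and everything below is just unpacking $J'(u)=0$.

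First I would record the Nehari identity $J'(u)[u]=0$, which reads
\begin{equation*}
\frac{1}{2}\|\nabla u\|_2^2+\omega\||x|^{k/2}u\|_2^2-\|u\|_4^4+\mu\|u\|_6^6=\lambda\|u\|_2^2+\Omega L(u),
\end{equation*}
where $\||x|^{k/2}u\|_2^2=\int_{\mathbb{R}^3}|x|^k|u|^2\,dx$. Next, for $t>0$ set $u^t(x):=u(x/t)$. Elementary changes of variables give the exact scalings $\|\nabla u^t\|_2^2=t\|\nabla u\|_2^2$, $\||x|^{k/2}u^t\|_2^2=t^{k+3}\||x|^{k/2}u\|_2^2$, $\|u^t\|_2^2=t^3\|u\|_2^2$ and $\|u^t\|_q^q=t^3\|u\|_q^q$ for $q\in\{4,6\}$; moreover $L_z$ commutes with dilations, so $L_zu^t=(L_zu)^t$ and hence $L(u^t)=t^3L(u)$ — note that this is the same power of $t$ as the mass, which is why the rotation term disappears from the final identity just as in the non-rotating case. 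Consequently $u^t\in\Sigma$ for every $t>0$, $u^t\to u$ in $\Sigma$ as $t\to1$, and by the scalings
\begin{equation*}
\sigma(t):=J(u^t)=\frac{t}{2}\|\nabla u\|_2^2+\omega t^{k+3}\||x|^{k/2}u\|_2^2-\frac{t^3}{2}\|u\|_4^4+\frac{\mu t^3}{3}\|u\|_6^6-t^3\big(\lambda\|u\|_2^2+\Omega L(u)\big)
\end{equation*}
is an explicit smooth function of $t$. Since $t\mapsto u^t$ is (modulo the regularity point discussed below) a $C^1$ curve in $\Sigma$ through $u$ at $t=1$ with tangent $-x\cdot\nabla u$, and $J'(u)=0$, we get $\sigma'(1)=J'(u)[-x\cdot\nabla u]=0$, i.e.
\begin{equation*}
\frac{1}{2}\|\nabla u\|_2^2+\omega(k+3)\||x|^{k/2}u\|_2^2-\frac{3}{2}\|u\|_4^4+\mu\|u\|_6^6-3\big(\lambda\|u\|_2^2+\Omega L(u)\big)=0 .
\end{equation*}
Substituting the Nehari identity for $\lambda\|u\|_2^2+\Omega L(u)$ and collecting terms yields $\|\nabla u\|_2^2-\omega k\||x|^{k/2}u\|_2^2-\frac{3}{2}\|u\|_4^4+2\mu\|u\|_6^6=0$, which is precisely $Q(u)=0$.

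The main obstacle is entirely one of rigour: a priori $u$ is only a weak $\Sigma$-solution, so the integration by parts implicit in $\sigma'(1)=J'(u)[-x\cdot\nabla u]$ (equivalently, in pairing (\ref{e1-3}) directly with $x\cdot\nabla\bar u$) must be justified — that is, $x\cdot\nabla u$ must be an admissible element of $\Sigma$ and no boundary term may survive at infinity. I would handle this by first upgrading regularity: interior elliptic estimates together with bootstrapping give $u\in H^{2}_{\mathrm{loc}}\cap C^{1}$, and the confining potential $V=\omega|x|^{k}$ forces Agmon-type rapid decay of $u$ and $\nabla u$, so that $x\cdot\nabla u\in\Sigma$ and the computation above is legitimate. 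Alternatively, and avoiding decay estimates entirely, one can run the argument with the truncated field $\chi(x/R)\,x$ (for a standard cutoff $\chi$) in place of $x$, perform the now-elementary integration by parts, and let $R\to\infty$, absorbing the error terms using $u\in\Sigma$ and $|u|^4,|u|^6\in L^1(\mathbb{R}^3)$. Beyond this point the proof is a routine change of variables and bookkeeping of powers of $t$.
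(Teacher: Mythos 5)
Your argument is correct and is essentially the paper's own proof: the paper likewise pairs (\ref{e1-3}) with $x\cdot\nabla\bar u$ and with $\bar u$ and eliminates the multipliers, and your scaling relation $L(u^t)=t^3L(u)$ is exactly, after differentiating at $t=1$, the paper's one nontrivial ingredient $\mathrm{Re}\int_{\mathbb{R}^3}[(\Omega\cdot L)u](x\cdot\nabla\bar u)\,dx=-\tfrac{3}{2}\int_{\mathbb{R}^3}\bar u(\Omega\cdot L)u\,dx$. Your algebra checks out, and your discussion of the regularity/truncation needed to justify pairing a weak solution with $x\cdot\nabla\bar u$ supplies rigour that the paper's two-line sketch omits.
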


\begin{proof}
The Pohozaev identity $Q(u)=0$ can be proved by multiplying Eq. (\ref{e1-3})
by $x\cdot \nabla \bar{u}$ and $\bar{u},$ respectively, and then integrating
over $\mathbb{R}^{3}$. Moreover, we also need the following identity
\begin{equation*}
\text{Re}\int_{\mathbb{R}^{3}}[(\Omega \cdot L)u](x\cdot \nabla \bar{u})dx=-%
\frac{3}{2}\int_{\mathbb{R}^{3}}\bar{u}(\Omega \cdot L)udx.
\end{equation*}%
The proof is complete.
\end{proof}

\begin{definition}
\label{d1}For given $c\in \mathbb{R}$, we say that the functional $%
E|_{S(m,l)}$ satisfies the $(PS)_{c}$ condition if any sequence $\left\{
u_{n}\right\} \subset S(m,l)$ for which
\begin{equation*}
E(u_{n})\rightarrow c\quad \text{and }E^{\prime
}|_{S(m,l)}(u_{n})\rightarrow 0,
\end{equation*}%
has a strongly convergent subsequence in $\Sigma $. In addition, one says
that the functional $E|_{S(m,l)}$ satisfies the $(PSP)_{c}$ condition if any
sequence $\left\{ u_{n}\right\} \subset S(m,l)$ for which
\begin{equation*}
E(u_{n})\rightarrow c,\quad E^{\prime }|_{S(m,l)}(u_{n})\rightarrow 0\text{
and }Q(u_{n})\rightarrow 0,
\end{equation*}%
has a strongly convergent subsequence in $\Sigma $.
\end{definition}

We note that the $(PS)_{c}$ condition is stronger than the $(PSP)_{c}$
condition. Applying directly the abstract deformation result in \cite{IT},
we can get the following deformation lemma. For given $c\in \mathbb{R}$, let
\begin{equation*}
E^{c}:=\left\{u\in S(m,l):E(u)\leq c\right\}
\end{equation*}
and denote by $K^{c}$ the set of critical points of $E|_{S(m,l)}$ at level $%
c\in\mathbb{R}$.

\begin{lemma}
\label{L2-8} Assume that the functional $E|_{S(m,l)}$ satisfies the $%
(PS)_{c} $ condition at some level $c\in \mathbb{R}$, then for any
neighborhood $\mathcal{O}\subset S(m,l)$ of $(K^{c}\mathcal{O}=\emptyset $
if $K^{c}=\emptyset )$ and any $\bar{\varepsilon}>0$, there exists $%
\varepsilon \in (0,\bar{\varepsilon})$ and $\eta \in C\left( [0,1]\times
S(m,l),S_{m}(m,l)\right) $ such that the following properties hold.\newline
$(i)$ $\eta (0,u)=u$ for any $u\in S(m,l);$\newline
$(ii)$ $\eta (t,u)=u$ for any $t\in \lbrack 0,1]$ if $u\in E^{c-\bar{%
\varepsilon}};$\newline
$(iii)$ $t\mapsto I(\eta (t,u))$ is nonincreasing for any $u\in S(m,l);$%
\newline
$(iv)$ $\eta \left( 1,E^{c+\varepsilon }\backslash \mathcal{O}\right)
\subset E^{c-\varepsilon }$ and $\eta \left( 1,E^{c+\varepsilon }\right)
\subset E^{c-\varepsilon }\cup \mathcal{O}.$
\end{lemma}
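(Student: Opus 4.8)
The plan is to recognize Lemma \ref{L2-8} as a direct instance of the abstract constrained deformation theorem of \cite{IT}, so that the whole argument reduces to verifying its hypotheses: that $S(m,l)$ is a complete $C^{1}$-Riemannian manifold, that $E$ restricted to it is of class $C^{1}$, and that the $(PS)_{c}$ condition supplies the compactness the abstract result requires. Once these are in place, $\eta$ is produced in the classical way: one builds a locally Lipschitz pseudo-gradient vector field for $E|_{S(m,l)}$ on its set of regular points, multiplies it by a cutoff that vanishes on a neighborhood of $K^{c}$ contained in $\mathcal{O}$ and on $E^{c-\bar{\varepsilon}}$, and lets $\eta(t,\cdot)$ be the time-rescaled negative flow of this field; then $(i)$--$(iii)$ are immediate from the flow properties, and $(iv)$ is the point at which the quantitative estimate below is used.

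For the manifold structure, I would set $G:=(M,L):\Sigma\to\mathbb{R}^{2}$, so that $S(m,l)=G^{-1}(m,l)$; here $G$ is smooth, with $dM(u)$ and $dL(u)$ represented through the $L^{2}$-pairing by $2u$ and $2L_{z}u$ respectively (using the self-adjointness of $L_{z}$). The one point requiring care is that $dM(u)$ and $dL(u)$ be linearly independent at every $u\in S(m,l)$: a dependence would force $L_{z}u=(l/m)u$, i.e. $u$ would be an $L^{2}$-eigenfunction of the rotation generator $L_{z}$, which is excluded under the working hypotheses $m,l>0$ (cf. \cite{NS,GS}). Granting this, $S(m,l)$ is a codimension-two $C^{1}$-submanifold of $\Sigma$, closed in $\Sigma$ as the preimage of a point under a continuous map, hence complete, with $T_{u}S(m,l)=\{v\in\Sigma:(u,v)_{2}=(L_{z}u,v)_{2}=0\}$ and the Riemannian metric induced by $(\cdot,\cdot)_{\Sigma}$. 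That $E$ is $C^{1}$ on $\Sigma$ — and therefore $E|_{S(m,l)}$ is $C^{1}$, with $E'|_{S(m,l)}(u)=E'(u)-2\lambda(u)u-2\Omega(u)L_{z}u$ for the Lagrange multipliers $\lambda(u),\Omega(u)$ — follows from smoothness of the quadratic part together with the continuous (and, for exponents below $6$, compact by Lemma \ref{L2-3}) embeddings $\Sigma\hookrightarrow L^{q}$ and $\Sigma\hookrightarrow H^{1}\hookrightarrow L^{6}$.

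The remaining ingredient, and the place where the $(PS)_{c}$ hypothesis genuinely enters, is the lower bound
\begin{equation*}
\delta:=\inf\left\{\|E'|_{S(m,l)}(u)\|_{\ast}:u\in S(m,l),\ |E(u)-c|\le\bar{\varepsilon},\ u\notin\mathcal{O}\right\}>0.
\end{equation*}
If $\delta=0$ one could choose $u_{n}\in S(m,l)\setminus\mathcal{O}$ with $E(u_{n})\to c$ and $E'|_{S(m,l)}(u_{n})\to0$; the $(PS)_{c}$ condition would give a subsequence converging strongly in $\Sigma$ to some $u_{\infty}$, and then $u_{\infty}\in K^{c}$ while $u_{\infty}\notin\mathcal{O}$ (since $\mathcal{O}$ is open), contradicting that $\mathcal{O}$ is a neighborhood of $K^{c}$. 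With $\delta>0$ in hand, the abstract theorem of \cite{IT} applies verbatim and yields, for every sufficiently small $\varepsilon\in(0,\bar{\varepsilon})$, a deformation $\eta\in C([0,1]\times S(m,l),S(m,l))$ with properties $(i)$--$(iv)$. I expect the only subtle point to be the verification that $S(m,l)$ is a genuine $C^{1}$-submanifold, i.e. the linear independence of $dM$ and $dL$ on it; everything else is the standard deformation machinery on Hilbert manifolds transcribed into the present doubly constrained setting.
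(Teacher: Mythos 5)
Your overall strategy coincides with the paper's: the authors give no proof at all here, simply invoking the abstract deformation result of \cite{IT}, so your fleshed-out verification (manifold structure, $C^{1}$ regularity of $E$, a gradient lower bound away from $K^{c}$ supplied by $(PS)_{c}$, then the truncated pseudo-gradient flow) is exactly the standard way to make that invocation rigorous, and properties $(i)$--$(iv)$ do follow from it once the hypotheses are in place.

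However, the one point you yourself single out as requiring care is resolved incorrectly. You claim $dM(u)=2u$ and $dL(u)=2L_{z}u$ are linearly independent at every $u\in S(m,l)$ because $L_{z}u=(l/m)u$ is ``excluded'' for $m,l>0$. It is not: $L_{z}$ has a full complement of $L^{2}$-eigenfunctions, namely $u(x)=f(r,x_{3})e^{in\theta}$ in cylindrical coordinates about the $z$-axis, with eigenvalue $n\in\mathbb{Z}$. Any such $u$ with $\Vert u\Vert_{2}^{2}=m$ satisfies $L(u)=nm$, so whenever $l/m$ is a positive integer the set $S(m,l)$ contains points at which $dM$ and $dL$ are parallel, and the codimension-two submanifold claim fails there. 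The paper itself tacitly concedes this possibility: in the proof of Lemma \ref{L4-1} the case where $F_{1}^{\prime}(u_{n})$ and $F_{2}^{\prime}(u_{n})$ are linearly dependent is treated as a separate Case $(ii)$ with a degenerate tangent space. To close the gap you must either restrict to ratios $l/m$ for which no such eigenfunctions lie in the relevant sublevel sets, or carry the two-case decomposition of the tangent space (as in \cite{GS} and Lemma \ref{L4-1}) through the construction of the pseudo-gradient field. A second, smaller slip: you define $\delta$ as an infimum over the fixed window $|E(u)-c|\le\bar{\varepsilon}$ and then assert that a minimizing sequence satisfies $E(u_{n})\to c$; with a fixed window this does not follow, so $(PS)_{c}$ is not directly applicable. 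The standard repair is to take the infimum over shrinking windows $|E(u)-c|\le\varepsilon_{n}$ with $\varepsilon_{n}\to0$ and derive the contradiction along that diagonal sequence.
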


\section{The proof of Theorem \protect\ref{t1}}

\begin{lemma}
\label{L2-5} For any $\rho >0$, there exist $m^{\ast }>0$ and $\mu ^{\ast
}>0 $ such that for any $m<m^{\ast }$ and $\mu <\mu ^{\ast }$,
\begin{equation}
\inf_{S(m,l)\cap B_{b\rho }}E(u)<\inf_{u\in S(m,l)\cap (B_{\rho }\backslash
B_{a\rho })}E(u),  \label{e2-5}
\end{equation}%
where $B_{\rho }$ is as in (\ref{e1-5}) and $0<b<a<1.$
\end{lemma}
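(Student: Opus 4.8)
The plan is the two–sided energy estimate that is standard for trapping a constrained local minimizer inside a ball: bound $E$ from below on the shell $S(m,l)\cap(B_{\rho}\setminus B_{a\rho})$, bound it from above on the inner ball $S(m,l)\cap B_{b\rho}$, and then shrink $m$ and $\mu$ until the former bound strictly exceeds the latter. Everything is read off from the identity $E(u)=\|u\|_{\dot{\Sigma}}^{2}-\tfrac{1}{2}\|u\|_{4}^{4}+\tfrac{\mu}{3}\|u\|_{6}^{6}$ together with $\|\nabla u\|_{2}^{2}\le 2\|u\|_{\dot{\Sigma}}^{2}$, both immediate from the definition of $\|\cdot\|_{\dot{\Sigma}}$. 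For the lower bound, if $u\in S(m,l)$ with $a\rho<\|u\|_{\dot{\Sigma}}^{2}\le\rho$, I discard the nonnegative defocusing term $\tfrac{\mu}{3}\|u\|_{6}^{6}$ and control the focusing term via Gagliardo--Nirenberg (Lemma \ref{L2-1}, $q=4$): $\|u\|_{4}^{4}\le\mathcal{S}_{4}\|\nabla u\|_{2}^{3}\|u\|_{2}\le\mathcal{S}_{4}(2\rho)^{3/2}m^{1/2}$, whence $\inf_{S(m,l)\cap(B_{\rho}\setminus B_{a\rho})}E\ge a\rho-\tfrac{1}{2}\mathcal{S}_{4}(2\rho)^{3/2}m^{1/2}$.

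For the upper bound, I take any $u\in S(m,l)\cap B_{b\rho}$ — a set that is nonempty once $\rho$ is fixed large enough relative to $l$ and $\omega$ — discard the nonpositive focusing term $-\tfrac{1}{2}\|u\|_{4}^{4}$, and control the defocusing term by the Sobolev inequality (Lemma \ref{L2-2}): $\|u\|_{6}^{6}\le S\|\nabla u\|_{2}^{6}\le S(2b\rho)^{3}$, so $\inf_{S(m,l)\cap B_{b\rho}}E\le b\rho+\tfrac{8\mu}{3}Sb^{3}\rho^{3}$. Comparing the two infima, (\ref{e2-5}) holds as soon as $\tfrac{8\mu}{3}Sb^{3}\rho^{3}+\tfrac{1}{2}\mathcal{S}_{4}(2\rho)^{3/2}m^{1/2}<(a-b)\rho$; since $a>b$ while $\rho,a,b$ are fixed and $\mathcal{S}_{4},S$ are independent of $m$ and $\mu$, it suffices to pick $\mu^{\ast}$ so small that the first term stays below $\tfrac{1}{2}(a-b)\rho$ for $\mu<\mu^{\ast}$ and then $m^{\ast}$ so small that the second term stays below $\tfrac{1}{2}(a-b)\rho$ for $m<m^{\ast}$.

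The step I expect to matter most — and the reason the hypothesis $\mu<\mu^{\ast}$ enters here, in contrast to the purely subcritical setting of \cite{GS} where smallness of $m$ alone suffices — is the treatment of the quintic term: because it sits at the critical Sobolev exponent, its contribution on the inner ball is bounded only by $\tfrac{\mu}{3}S(2b\rho)^{3}$, which is \emph{not} small for small $m$ but only for small $\mu$. So the crux is getting the splitting right: on the inner ball the defocusing term is the unfavorable one and is absorbed through $\mu<\mu^{\ast}$, while on the shell it is simply dropped; dually the focusing term is dropped on the inner ball and absorbed on the shell through Gagliardo--Nirenberg and small mass. The remaining points — notably verifying $S(m,l)\cap B_{b\rho}\neq\emptyset$ by producing a normalized competitor carrying the prescribed angular momentum $l$, and checking all constants are uniform — are routine bookkeeping.
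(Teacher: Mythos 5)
Your proof is correct and follows essentially the same route as the paper's: Gagliardo--Nirenberg to control the quartic term on the shell, the Sobolev inequality to control the quintic term on the inner ball, and then shrinking $\mu$ and $m$ separately to beat the gap $(a-b)\rho$ (you are in fact more careful with the factor $\|\nabla u\|_{2}^{2}\le 2\|u\|_{\dot{\Sigma}}^{2}$ than the paper is). The one small discrepancy is the nonemptiness of $S(m,l)\cap B_{b\rho}$: since the lemma is asserted for \emph{any} fixed $\rho>0$, this must come from shrinking $m$ (the paper invokes the angular-momentum eigenfunction decomposition of \cite{NS} to get $S(m,l)\cap B_{\rho}\neq\emptyset$ for $m<m_{1}(\rho)$), not from taking $\rho$ large as your parenthetical suggests.
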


\begin{proof}
By \cite[Lemma 2.2]{NS}, we know that $\text{dim}S(m,l)=\infty $ and $%
S(m,l)\neq \emptyset $ is isometrically isomorphic to a non-empty subset of
\begin{equation*}
\left\{ (c_{n})_{n\in \mathbb{N}_{0}}\subset l^{2}(\mathbb{R}^{3}):\sqrt{n}%
c_{n}\in l^{2}(\mathbb{R}^{3})\right\} ,
\end{equation*}%
where $(c_{n})$ are the coefficients from the angular momentum eigenfunction
decomposition of $u$. The isometric isomorphism ensures the equivalence
between the $\Sigma $-norm and the $l^{2}$-norm. Thus for any $\rho >0$,
there exists $m_{1}(\rho )>0$ such that $S(m,l)\cap B_{\rho }\neq \emptyset $
if $m<m_{1}(\rho )$.

For any $u\in S(m,l)\cap (B_{\rho }\backslash B_{a\rho })$, by (\ref{e2-1}),
we have
\begin{eqnarray}
E(u) &=&\Vert u\Vert _{\dot{\Sigma}}^{2}+\frac{\mu }{3}\int_{\mathbb{R}%
^{3}}|u|^{6}dx-\frac{1}{2}\int_{\mathbb{R}^{3}}|u|^{4}dx  \notag \\
&\geq &\Vert u\Vert _{\dot{\Sigma}}^{2}-\frac{\mathcal{S}_{4}}{2}\Vert
\nabla u\Vert _{2}^{3}\Vert u\Vert _{2}  \notag \\
&\geq &a\rho -\frac{\mathcal{S}_{4}}{2}\sqrt{m}\rho ^{\frac{3}{2}}.
\label{e2-6}
\end{eqnarray}%
On the other hand, for any $u\in S(m,l)\cap B_{b\rho }$, we deduce from (\ref%
{e2-2}) that
\begin{equation}
E(u)\leq \Vert u\Vert _{\dot{\Sigma}}^{2}+\frac{\mu S}{3}\Vert \nabla u\Vert
_{2}^{6}\leq b\rho +\frac{2\mu S}{3}(b\rho )^{3}.  \label{e2-4}
\end{equation}%
Thus by (\ref{e2-6}) and (\ref{e2-4}), there exists $m_{2}(\rho )>0$ and $%
\mu ^{\ast }>0$ such that
\begin{equation*}
\frac{\mathcal{S}_{4}}{2}\sqrt{m}\rho ^{\frac{3}{2}}+\frac{2\mu S}{3}(b\rho
)^{3}<(a-b)\rho
\end{equation*}%
for $m<m_{2}(\rho )$ and $\mu <\mu ^{\ast }$. So for $m<m^{\ast }\leq \min
\left\{ m_{1},m_{2}\right\} $ and $\mu <\mu ^{\ast }$, we have (\ref{e2-5})
holds. The proof is complete.
\end{proof}

\begin{lemma}
\label{L3-0} Let $m<m^{\ast }$ and $\mu <\mu ^{\ast }$. Then there exists a
local minimizer $u^{(1)}\in S(m,l)$ such that
\begin{equation*}
E(u^{(1)})=\beta _{m,l}^{\rho }=\inf_{u\in S(m,l)\cap B_{\rho }}E(u).
\end{equation*}
\end{lemma}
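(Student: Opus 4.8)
The plan is to show that the infimum $\beta_{m,l}^{\rho}$ is attained at an interior point of $S(m,l)\cap B_{\rho}$, so that no boundary constraint (beyond the mass and angular momentum constraints) is active and the minimizer solves \eqref{e1-3}--\eqref{e1-4}. First I would take a minimizing sequence $\{u_n\}\subset S(m,l)\cap B_{\rho}$ with $E(u_n)\to\beta_{m,l}^{\rho}$. Since $\|u_n\|_{\dot\Sigma}^2\le\rho$ and $\|u_n\|_2^2=m$, the sequence is bounded in $\Sigma$; by Lemma \ref{L2-3} the embedding $\Sigma\hookrightarrow L^q(\mathbb{R}^3)$ is compact for $q\in[2,6)$, so along a subsequence $u_n\rightharpoonup u^{(1)}$ weakly in $\Sigma$, $u_n\to u^{(1)}$ strongly in $L^q$ for $2\le q<6$, and a.e. Strong $L^2$-convergence immediately gives $M(u^{(1)})=m$, and strong $L^2$-convergence together with the fact that $L_z$ is a first-order operator controlled by the $\Sigma$-norm (the angular momentum functional $L$ is weakly continuous on $\Sigma$ thanks to the compact embedding, after an integration by parts pairing one derivative against a strongly convergent factor) yields $L(u^{(1)})=l$; hence $u^{(1)}\in S(m,l)$.

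Next I would pass to the limit in the energy. Write $E(u)=\|u\|_{\dot\Sigma}^2+\frac{\mu}{3}\int|u|^6-\frac12\int|u|^4$. The map $u\mapsto\|u\|_{\dot\Sigma}^2$ is weakly lower semicontinuous, the quartic term converges because of strong $L^4$-convergence, and the sextic term is weakly lower semicontinuous (it is convex, or one invokes Brezis--Lieb and discards the nonnegative defect). Therefore
\[
E(u^{(1)})\le\liminf_{n\to\infty}E(u_n)=\beta_{m,l}^{\rho}.
\]
It remains to check that $u^{(1)}\in B_{\rho}$, i.e. $\|u^{(1)}\|_{\dot\Sigma}^2\le\rho$; this follows from weak lower semicontinuity of $\|\cdot\|_{\dot\Sigma}^2$ and $\|u_n\|_{\dot\Sigma}^2\le\rho$. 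Combined with $u^{(1)}\in S(m,l)$, the definition of $\beta_{m,l}^{\rho}$ forces $E(u^{(1)})=\beta_{m,l}^{\rho}$, so $u^{(1)}$ is a minimizer and, in particular, $\|u_n\|_{\dot\Sigma}\to\|u^{(1)}\|_{\dot\Sigma}$, giving strong convergence $u_n\to u^{(1)}$ in $\Sigma$.

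The main obstacle — and the place where Lemma \ref{L2-5} is used — is ensuring that the minimizer lies in the \emph{interior} of the ball, i.e. $\|u^{(1)}\|_{\dot\Sigma}^2<\rho$, so that it is a genuine free critical point of $E|_{S(m,l)}$ rather than a constrained one on $\partial B_{\rho}$. Here one compares energy levels: by Lemma \ref{L2-5}, with the choice $0<b<a<1$, one has $\beta_{m,l}^{b\rho}:=\inf_{S(m,l)\cap B_{b\rho}}E<\inf_{S(m,l)\cap(B_{\rho}\setminus B_{a\rho})}E$, which means the minimizing sequence (after replacing $\{u_n\}$ with a sequence in $B_{b\rho}$ realizing the strictly smaller value) cannot drift into the annulus $B_{\rho}\setminus B_{a\rho}$; hence $\|u^{(1)}\|_{\dot\Sigma}^2\le a\rho<\rho$. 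Finally, a standard Lagrange-multiplier argument on the two constraints $M(u)=m$, $L(u)=l$ (the constraint gradients $u$ and $L_z u$ being linearly independent for $u\in S(m,l)$ with $l>0$) produces $\lambda,\Omega\in\mathbb{R}$ such that $u^{(1)}$ weakly solves \eqref{e1-3}, completing the proof.
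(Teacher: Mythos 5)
Your proposal is correct and follows essentially the same route as the paper: direct minimization on $S(m,l)\cap B_{\rho}$, compactness of the embedding $\Sigma\hookrightarrow L^{q}$, weak continuity of the mass and angular-momentum constraints, weak lower semicontinuity of $E$, and Lemma \ref{L2-5} to exclude the boundary $\partial B_{\rho}$ before invoking Lagrange multipliers. The only place where the paper is more explicit is the passage to the limit in $L(u_n)$, where the unbounded weight $x_{1}$ forces a tail estimate $\int_{|x|>R}|x|^{2}|u_{n}|^{2}dx\leq R^{2-k}\int|x|^{k}|u_{n}|^{2}dx$ using the confining potential with $k>2$ — your parenthetical remark alludes to exactly this, so the argument matches.
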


\begin{proof}
Let $\left\{ u_{n}\right\} \subset S(m,l)\cap B_{\rho }$ be a minimizing
sequence for $\beta _{m,l}^{\rho }$. Then $\left\{ u_{n}\right\} $ is
bounded in $\Sigma $. By the compactness of the embedding $\Sigma
\hookrightarrow L^{q}(\mathbb{R}^{3})$ for $q\in \lbrack 2,6)$, there exists
$u^{(1)}\in \Sigma $ such that%
\begin{equation*}
\begin{array}{ll}
u_{n}\rightharpoonup u^{(1)} & \text{ in }\Sigma , \\
u_{n}\rightarrow u^{(1)} & \text{ in }L^{q}(\mathbb{R}^{3}), \\
u_{n}\rightarrow u^{(1)} & \text{ a.e. in }\mathbb{R}^{3}.%
\end{array}%
\end{equation*}%
So, we have $u^{(1)}\in B_{\rho }$ and $\Vert u^{(1)}\Vert _{2}^{2}=m$.

Next we prove that $L(u^{(1)})=l$. Using the similar argument in \cite[%
Proposition 2.4]{NS}. Let
\begin{equation*}
L(u_{n})=-i(A-B),
\end{equation*}%
where
\begin{equation*}
A=\left\langle u_{n},x_{1}\partial _{x_{2}}u_{n}\right\rangle _{L^{2}}\text{
and }B=\left\langle u_{n},x_{2}\partial _{x_{1}}u_{n}\right\rangle _{L^{2}}.
\end{equation*}%
By Cauchy-Schwarz inequality, for some $R>0$, we have
\begin{eqnarray*}
\left\vert \int_{|x|>R}\overline{u_{n}}x_{1}\partial
_{x_{2}}u_{n}dx\right\vert &\leq &\Vert \nabla u_{n}\Vert _{2}\left(
\int_{|x|>R}|x|^{2}|u_{n}|^{2}dx\right) ^{1/2} \\
&\leq &\Vert \nabla u_{n}\Vert _{2}\left( \int_{|x|>R}\frac{|x|^{k}}{%
|x|^{k-2}}|u_{n}|^{2}dx\right) ^{1/2} \\
&\leq &\frac{1}{R^{\frac{k-2}{2}}}\Vert \nabla u_{n}\Vert _{2}\Vert
|x|^{k/2}u_{n}\Vert _{2} \\
&\leq &\frac{1}{R^{\frac{k-2}{2}}}\Vert u_{n}\Vert _{\Sigma }^{2}.
\end{eqnarray*}%
Let $\varepsilon >0$. Then there exists $R_{\varepsilon }>0$ such that for $%
n\in \mathbb{N}$,
\begin{equation*}
\left\vert \int_{|x|>R_{\varepsilon }}\overline{u_{n}}x_{1}\partial
_{x_{2}}u_{n}dx\right\vert <\varepsilon /3.
\end{equation*}%
For $|x|\leq R_{\varepsilon }$, we know that $u_{n}\rightarrow u^{(1)}$ in $%
L^{2}(\mathbb{R}^{3})$ and $\partial _{x_{2}}u_{n}\rightharpoonup \partial
_{x_{2}}u^{(1)}$ in $L^{2}(\mathbb{R}^{3})$. This implies that
\begin{equation*}
\overline{u_{n}}x_{1}\partial _{x_{2}}u_{n}\rightarrow \overline{u^{(1)}}%
x_{1}\partial _{x_{2}}u^{(1)}\text{ in }L^{1}(B_{R_{\varepsilon }}(0)).
\end{equation*}%
Hence, there exists $N_{\varepsilon }>0$ such that for $n\geq N_{\varepsilon
}$,
\begin{equation*}
\left\vert \int_{|x|\leq R_{\varepsilon }}\left( \overline{u_{n}}%
x_{1}\partial _{x_{2}}u_{n}-\overline{u^{(1)}}x_{1}\partial
_{x_{2}}u^{(1)}\right) dx\right\vert <\varepsilon /3.
\end{equation*}%
For the estimates of $A$, we have
\begin{eqnarray*}
A &=&\int_{\mathbb{R}^{3}}\overline{u_{n}}x_{1}\partial _{x_{2}}u_{n}dx \\
&=&\left\vert \int_{|x|\leq R}\overline{u_{n}}x_{1}\partial
_{x_{2}}u_{n}dx\right\vert +\left\vert \int_{|x|>R}\overline{u_{n}}%
x_{1}\partial _{x_{2}}u_{n}dx\right\vert \\
&\rightarrow &\left\langle u^{(1)},x_{1}\partial
_{x_{2}}u^{(1)}\right\rangle _{L^{2}},\text{ as }n\rightarrow \infty .
\end{eqnarray*}%
For $B$, we can get the same result by using the above steps. To sum up, we
have
\begin{equation*}
l=\lim_{n\rightarrow \infty }L(u_{n})=-i\left( \left\langle
u^{(1)},x_{1}\partial _{x_{2}}u^{(1)}\right\rangle _{L^{2}}-\left\langle
u^{(1)},x_{2}\partial _{x_{1}}u^{(1)}\right\rangle _{L^{2}}\right)
=L(u^{(1)}).
\end{equation*}%
Moreover, the energy functional $E$ is weakly lower semi-continuous. Hence,
we have
\begin{equation*}
E(u^{(1)})\leq \lim_{n\rightarrow \infty }E(u_{n})=\beta _{m,l}^{\rho }\leq
E(u^{(1)}),
\end{equation*}%
which indicates that%
\begin{equation*}
E(u^{(1)})=\beta _{m,l}^{\rho }\text{ and }u_{n}\rightarrow u^{(1)}\text{ in
}\Sigma .
\end{equation*}%
This implies that any minimizing sequence for $\beta _{m}^{\rho }$ is
precompact and $\mathcal{B}_{m,l}^{\rho }\neq \emptyset $. Moreover, it
follows from Lemma \ref{L2-2} that $u^{(1)}\not\in S(m,l)\cap \partial
B_{\rho }$ as $u^{(1)}\in B_{b\rho }$, where $\partial B_{\rho }:=\left\{
u\in \Sigma :\Vert u\Vert _{\dot{\Sigma}}^{2}=\rho \right\} $. Then $u^{(1)}$
is indeed a critical point of $E|_{S(m,l)}$. Therefore, there exist Lagrange
multipliers $\lambda ,\Omega \in \mathbb{R}$ such that $u^{(1)}$ is a weak
solution to problem (\ref{e1-3})-(\ref{e1-4}). The proof is complete.
\end{proof}

\begin{lemma}
\label{L3-1} Let $\mu >0$. Then the functional $E$ is bounded from below and
coercive on $S(m,l)$ for all $m,l>0.$
\end{lemma}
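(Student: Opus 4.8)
The plan is to show that on $S(m,l)$ the defocusing quintic term dominates the focusing quartic term, so that $E(u)$ stays above $\|u\|_{\dot{\Sigma}}^{2}$ minus a constant depending only on $m$ and $\mu$. First I would record that every $u\in S(m,l)$ satisfies $\|u\|_{2}^{2}=m$, and interpolate the $L^{4}$-norm between $L^{2}$ and $L^{6}$: since $\tfrac14=\tfrac{1/4}{2}+\tfrac{3/4}{6}$, Hölder's inequality gives $\|u\|_{4}\leq\|u\|_{2}^{1/4}\|u\|_{6}^{3/4}$, hence $\|u\|_{4}^{4}\leq\sqrt{m}\,\|u\|_{6}^{3}$ for all $u\in S(m,l)$.

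Next I would apply Young's inequality $ab\leq\tfrac{\delta}{2}a^{2}+\tfrac{1}{2\delta}b^{2}$ with $a=\|u\|_{6}^{3}$ and $b=\sqrt{m}$ to get $\tfrac12\|u\|_{4}^{4}\leq\tfrac{\delta}{4}\|u\|_{6}^{6}+\tfrac{m}{4\delta}$ for any $\delta>0$. Choosing $\delta<\tfrac{4\mu}{3}$ so that $\tfrac{\mu}{3}-\tfrac{\delta}{4}>0$, one then obtains, for every $u\in S(m,l)$,
\[
E(u)=\|u\|_{\dot{\Sigma}}^{2}+\tfrac{\mu}{3}\|u\|_{6}^{6}-\tfrac12\|u\|_{4}^{4}\ \geq\ \|u\|_{\dot{\Sigma}}^{2}+\Big(\tfrac{\mu}{3}-\tfrac{\delta}{4}\Big)\|u\|_{6}^{6}-\tfrac{m}{4\delta}\ \geq\ \|u\|_{\dot{\Sigma}}^{2}-\tfrac{m}{4\delta}.
\]

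This single estimate settles both assertions: the right-hand side is bounded below by $-\tfrac{m}{4\delta}$, so $E$ is bounded from below on $S(m,l)$; and since $\|u\|_{\Sigma}^{2}=\|u\|_{\dot{\Sigma}}^{2}+\|u\|_{2}^{2}=\|u\|_{\dot{\Sigma}}^{2}+m$ on $S(m,l)$, the bound $E(u)\geq\|u\|_{\dot{\Sigma}}^{2}-\tfrac{m}{4\delta}$ forces $E(u)\to+\infty$ as $\|u\|_{\Sigma}\to\infty$, which is coercivity.

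The only delicate point—and the reason one cannot simply mimic the usual subcritical argument—is that the quadratic gradient term $\tfrac12\|\nabla u\|_{2}^{2}$ in $\|u\|_{\dot{\Sigma}}^{2}$ cannot absorb $-\tfrac12\|u\|_{4}^{4}$ on its own: the Gagliardo--Nirenberg bound $\|u\|_{4}^{4}\leq\mathcal{S}_{4}\sqrt{m}\,\|\nabla u\|_{2}^{3}$ carries a cubic power of $\|\nabla u\|_{2}$, which beats the quadratic one. It is therefore essential to route the $L^{4}$ term through the positive $L^{6}$ term instead; after that the argument is immediate and, in contrast to the local minimization analysis, requires no smallness of $m$ or $\mu$.
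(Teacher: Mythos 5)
Your proof is correct and follows essentially the same route as the paper: both hinge on the Hölder interpolation $\|u\|_{4}^{4}\leq \sqrt{m}\,\|u\|_{6}^{3}$ on $S(m,l)$, after which the paper observes that $\tfrac{\mu}{3m}t^{2}-\tfrac12 t$ (with $t=\|u\|_{4}^{4}$) is bounded below, while you reach the same conclusion by Young's inequality — the two steps are equivalent and even yield the same optimal constant $-\tfrac{3m}{16\mu}$. Your closing remark about why the Gagliardo--Nirenberg route fails is a sound observation, though not needed for the proof.
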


\begin{proof}
It follows from H\"{o}lder's inequality that
\begin{equation*}
\int_{\mathbb{R}^{3}}|u|^{4}dx\leq \left( \int_{\mathbb{R}%
^{3}}|u|^{2}dx\right) ^{1/2}\left( \int_{\mathbb{R}^{3}}|u|^{6}dx\right)
^{1/2},
\end{equation*}%
leading to
\begin{equation*}
\int_{\mathbb{R}^{3}}|u|^{6}dx\geq m^{-1}\left( \int_{\mathbb{R}%
^{3}}|u|^{4}dx\right) ^{2}.
\end{equation*}%
Then we have
\begin{eqnarray*}
E(u) &=&\Vert u\Vert _{\dot{\Sigma}}^{2}+\frac{\mu }{3}\int_{\mathbb{R}%
^{3}}|u|^{6}dx-\frac{1}{2}\int_{\mathbb{R}^{3}}|u|^{4}dx \\
&\geq &\Vert u\Vert _{\dot{\Sigma}}^{2}+\frac{\mu }{3}m^{-1}\left( \int_{%
\mathbb{R}^{3}}|u|^{4}dx\right) ^{2}-\frac{1}{2}\int_{\mathbb{R}%
^{3}}|u|^{4}dx
\end{eqnarray*}%
which implies that $E$ is bounded from below and coercive on $S(m,l)$ for
all $m,l>0$. Indeed, the quantity
\begin{equation*}
-\frac{1}{2}\int_{\mathbb{R}^{3}}|u|^{4}dx+\frac{\mu }{3}m^{-1}\left( \int_{%
\mathbb{R}^{3}}|u|^{4}dx\right) ^{2}
\end{equation*}%
has a negative lower bound on $S(m,l)$. The proof is complete.
\end{proof}

\begin{lemma}
\label{L3-2}Let $m<m^{\ast }$. Then there exist $0<\mu _{\ast }\leq \mu
^{\ast }$ and $\omega \in S(m,l)$ with $\Vert \omega \Vert _{\dot{\Sigma}%
}>\rho $ such that for $\mu <\mu _{\ast }$, we have $E(\omega )<0$.
Moreover, there holds
\begin{equation*}
\alpha _{m,l}=\inf_{u\in S(m,l)}E(u)<0.
\end{equation*}
\end{lemma}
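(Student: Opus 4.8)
The plan is to exhibit, for small enough $\mu$, an explicit element of $S(m,l)$ lying outside the ball $B_\rho$ on which $E$ is negative; the conclusion $\alpha_{m,l}<0$ is then immediate since $\inf_{S(m,l)}E\le E(\omega)<0$. The natural device is a scaling that fixes the mass and the angular momentum while concentrating the function. Fix any $w_0\in S(m,l)$ (nonempty by \cite[Lemma 2.2]{NS}), and for $\tau>0$ consider the rescaling $w_\tau(x):=\tau^{3/2}w_0(\tau x)$, which preserves $\|w_\tau\|_2^2=m$. One checks that the angular momentum is invariant under this parabolic dilation because $L_z$ commutes with the scaling in the appropriate sense: $\langle w_\tau, L_z w_\tau\rangle = \langle w_0, L_z w_0\rangle = l$, so $w_\tau\in S(m,l)$ for all $\tau>0$. (If it is cleaner, one may instead only dilate in the $z$-direction, or use the angular-momentum-preserving scaling already used elsewhere in the paper for the path $\Gamma(m,l)$; the key point is merely that some one-parameter family in $S(m,l)$ has $\|\cdot\|_{\dot\Sigma}\to\infty$.) Under $w\mapsto w_\tau$ one has
\begin{equation*}
\tfrac12\|\nabla w_\tau\|_2^2=\tau^2\,\tfrac12\|\nabla w_0\|_2^2,\qquad
\omega\|x^{k/2}w_\tau\|_2^2=\tau^{-k}\omega\|x^{k/2}w_0\|_2^2,
\end{equation*}
\begin{equation*}
\int|w_\tau|^4=\tau^{3}\int|w_0|^4,\qquad \int|w_\tau|^6=\tau^{6}\int|w_0|^6.
\end{equation*}
Hence
\begin{equation*}
E(w_\tau)=\tfrac{\tau^2}{2}\|\nabla w_0\|_2^2+\tau^{-k}\omega\|x^{k/2}w_0\|_2^2
-\tfrac{\tau^{3}}{2}\int|w_0|^4+\tfrac{\mu\tau^{6}}{3}\int|w_0|^6 .
\end{equation*}

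For $\tau$ large the dominant competition is between the $-\tfrac{\tau^3}{2}\int|w_0|^4$ term and the $\tfrac{\mu\tau^6}{3}\int|w_0|^6$ term; the gradient term $\tau^2$ and the potential term $\tau^{-k}$ are lower order, the latter in fact vanishing. So I would first choose $\tau=\tau_0$ large enough (depending only on $w_0$, hence on $m,l,\omega$, but not on $\mu$) so that the $\mu$-independent part
\begin{equation*}
\tfrac{\tau_0^2}{2}\|\nabla w_0\|_2^2+\tau_0^{-k}\omega\|x^{k/2}w_0\|_2^2-\tfrac{\tau_0^{3}}{2}\int|w_0|^4<0,
\end{equation*}
which is possible since the $-\tau^3$ term beats the $\tau^2$ term and the $\tau^{-k}$ term tends to $0$. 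Having frozen $\tau_0$, the quintic contribution $\tfrac{\mu\tau_0^6}{3}\int|w_0|^6$ is a fixed positive multiple of $\mu$, so there is $\mu_0>0$ such that for all $\mu<\mu_0$ we still have $E(w_{\tau_0})<0$. Finally set $\mu_\ast:=\min\{\mu^\ast,\mu_0\}$ and note that, enlarging $\tau_0$ if necessary, we may also arrange $\|w_{\tau_0}\|_{\dot\Sigma}^2=\tfrac{\tau_0^2}{2}\|\nabla w_0\|_2^2+\tau_0^{-k}\omega\|x^{k/2}w_0\|_2^2>\rho$ (the $\tau^2$ term forces this for large $\tau$, and the strict inequality $E(w_{\tau_0})<0$ is stable under a further small increase of $\tau_0$ once it holds strictly). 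Take $\omega:=w_{\tau_0}$; then $\omega\in S(m,l)$, $\|\omega\|_{\dot\Sigma}>\rho$, and $E(\omega)<0$, whence $\alpha_{m,l}\le E(\omega)<0$.

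The one genuinely delicate point is the invariance of the angular momentum $L(w_\tau)=l$ under the chosen rescaling; everything else is bookkeeping with powers of $\tau$. For the isotropic parabolic dilation $w_\tau(x)=\tau^{3/2}w_0(\tau x)$ this follows from a change of variables in $\langle w_\tau,L_z w_\tau\rangle=-i\int\overline{w_\tau}(x_1\partial_{x_2}-x_2\partial_{x_1})w_\tau\,dx$: the factor $\tau^{3}$ from $|w_\tau|^2\,dx$, the factor $\tau$ from the derivative, the factor $\tau^{-1}$ from the coordinate $x_i$, and the Jacobian $\tau^{-3}$ cancel exactly. If one prefers to avoid even this computation, one can instead recycle the family $(u^{(2)})_t$ and the scalars $l_1$ already introduced in the construction of $\Gamma(m,l)$ in the paper, since that construction already produces elements of $S(m,l)$ of arbitrarily large $\dot\Sigma$-norm; but the self-contained scaling argument above is the cleanest route and isolates $\mu_\ast$ and $\omega$ explicitly.
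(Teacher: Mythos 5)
Your proposal is correct and follows essentially the same route as the paper: the paper also takes the mass- and angular-momentum-preserving dilation $\tau^{3/2}u^{(1)}(\tau x)$ (starting from the local minimizer rather than an arbitrary element of $S(m,l)$, which is immaterial), observes that for $\mu=0$ the energy tends to $-\infty$ as $\tau\to\infty$ because the $-\tau^{3}$ quartic term dominates, and then fixes $\tau$ and shrinks $\mu$ to keep $E(\omega)<0$. Your write-up is in fact slightly more careful than the paper's, since you verify the scaling invariance of $L_z$ explicitly and make the order of the choices of $\tau_0$ and $\mu_0$ precise.
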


\begin{proof}
Let us fix $u^{(1)}\in \mathcal{B}_{m,l}^{\rho }$ and $\omega =\tau
^{3/2}u^{(1)}(\tau x)$ for $\tau >0$. Clearly,
\begin{equation*}
\Vert \omega \Vert _{2}^{2}=\Vert u^{(1)}\Vert _{2}^{2}=m\text{ and }%
\left\langle L_{z}\omega ,\omega \right\rangle =\left\langle
L_{z}u^{(1)},u^{(1)}\right\rangle =l.
\end{equation*}%
Then we have $\omega \in S(m,l)\cap (\Sigma \backslash B_{\rho })$ for $\tau
>>1$. For $\mu =0$, we get
\begin{equation*}
E^{0}(\omega )=\frac{\tau ^{2}}{2}\Vert \nabla u^{(1)}\Vert _{2}^{2}+\tau
^{-k}\int_{\mathbb{R}^{3}}|x|^{k}|u^{(1)}|^{2}dx-\frac{|\gamma |\tau ^{3}}{2}%
\int_{\mathbb{R}^{3}}|u^{(1)}|^{4}dx\rightarrow -\infty \quad \text{as }\tau
\rightarrow +\infty ,
\end{equation*}%
where $E^{0}(\omega ):=E(\omega )$ for $\mu =0$. Thus there exists $\omega
\in S(m,l)$ with $\Vert \omega \Vert _{\dot{\Sigma}}>\rho $ such that $%
E^{0}(\omega )<0$. Since $E(\omega )\rightarrow E^{0}(\omega )$ as $\mu
\rightarrow 0^{+}$, there exists $\mu _{\ast }>0$ such that $E(\omega )<0$
for all $\mu <\mu _{\ast }\leq \mu ^{\ast }$. Moreover, we have
\begin{equation*}
\alpha _{m,l}\leq E(\omega )<0.
\end{equation*}%
The proof is complete.
\end{proof}

\begin{lemma}
\label{L3-3}Let $m<m^{\ast }$ and $\mu <\mu _{\ast }$. Then there exists a
global minimizer $u^{(2)}\in S(m,l)$ such that
\begin{equation*}
E(u^{(2)})=\alpha _{m,l}=\inf_{u\in S(m,l)}E(u)<0.
\end{equation*}
\end{lemma}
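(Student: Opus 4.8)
The plan is to run the direct method of the calculus of variations on $S(m,l)$, using the coercivity of $E$ from Lemma~\ref{L3-1}, the compact embedding of Lemma~\ref{L2-3}, and the passage to the limit for the angular momentum already established in Lemma~\ref{L3-0}; because the quintic term carries a favorable sign, the only genuinely analytic point will be that recovery of the angular momentum.

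First I would take a minimizing sequence $\{u_n\}\subset S(m,l)$, i.e. $E(u_n)\to\alpha_{m,l}$. By Lemma~\ref{L3-1} the functional $E$ is coercive on $S(m,l)$, so $\{u_n\}$ is bounded in $\Sigma$. Passing to a subsequence and using Lemma~\ref{L2-3}, there is $u^{(2)}\in\Sigma$ with $u_n\rightharpoonup u^{(2)}$ in $\Sigma$, $u_n\to u^{(2)}$ in $L^q(\mathbb{R}^3)$ for all $q\in[2,6)$, and $u_n\to u^{(2)}$ a.e. Then $\|u^{(2)}\|_2^2=\lim_n\|u_n\|_2^2=m$, and repeating verbatim the splitting argument of Lemma~\ref{L3-0} (the tail bound $|\int_{|x|>R}\overline{u_n}\,x_1\partial_{x_2}u_n\,dx|\le R^{-(k-2)/2}\|u_n\|_\Sigma^2$ combined with strong $L^2$-convergence and weak convergence of $\partial_{x_1}u_n,\partial_{x_2}u_n$ on bounded sets) yields $L(u^{(2)})=\lim_n L(u_n)=l$. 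Hence $u^{(2)}\in S(m,l)$.

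Next I would pass to the limit in the energy. Since $4\in[2,6)$, $\int_{\mathbb{R}^3}|u_n|^4\,dx\to\int_{\mathbb{R}^3}|u^{(2)}|^4\,dx$; the quadratic form $u\mapsto\|u\|_{\dot\Sigma}^2$ is convex and continuous, hence weakly lower semicontinuous; and Fatou's lemma gives $\liminf_n\int_{\mathbb{R}^3}|u_n|^6\,dx\ge\int_{\mathbb{R}^3}|u^{(2)}|^6\,dx$. Therefore $E(u^{(2)})\le\liminf_n E(u_n)=\alpha_{m,l}$, while the reverse inequality holds because $u^{(2)}\in S(m,l)$; thus $E(u^{(2)})=\alpha_{m,l}$, and this value is negative by Lemma~\ref{L3-2}. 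Strong convergence then follows a posteriori: once the quartic term has converged, the identity $\|u_n\|_{\dot\Sigma}^2+\frac{\mu}{3}\int_{\mathbb{R}^3}|u_n|^6\,dx\to\|u^{(2)}\|_{\dot\Sigma}^2+\frac{\mu}{3}\int_{\mathbb{R}^3}|u^{(2)}|^6\,dx$, in which both nonnegative terms are lower semicontinuous, forces each to converge separately; combined with $\|u_n\|_2^2\to\|u^{(2)}\|_2^2$ this gives $\|u_n\|_\Sigma\to\|u^{(2)}\|_\Sigma$, so $u_n\to u^{(2)}$ in $\Sigma$ and every minimizing sequence for $\alpha_{m,l}$ is precompact.

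Finally, since $u^{(2)}$ realizes the global minimum of $E$ on $S(m,l)$, it is a critical point of $E|_{S(m,l)}$, and by the Lagrange multiplier rule there exist $\lambda,\Omega\in\mathbb{R}$ such that $u^{(2)}$ is a weak solution of (\ref{e1-3})-(\ref{e1-4}). The main obstacle is precisely the step $L(u_n)\to L(u^{(2)})$; the lack of compactness of $\Sigma\hookrightarrow L^6$ is not a difficulty here, since the quintic term enters $E$ with a plus sign so that only lower semicontinuity is needed, and strong $\Sigma$-convergence is then recovered from the energy identity rather than assumed.
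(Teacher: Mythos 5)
Your proposal is correct and follows essentially the same route as the paper: the direct method using coercivity from Lemma~\ref{L3-1}, the compact embedding of Lemma~\ref{L2-3}, the angular-momentum splitting argument from Lemma~\ref{L3-0}, and weak lower semicontinuity of $E$ to conclude. You merely spell out more explicitly than the paper does why $E$ is weakly lower semicontinuous (strong convergence of the quartic term plus lower semicontinuity of the two nonnegative terms) and how strong $\Sigma$-convergence follows a posteriori from the energy identity, which is a welcome but not substantively different elaboration.
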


\begin{proof}
We choose a minimizing sequence $\left\{ u_{n}\right\} \subset S(m,l)$ such
that $\lim_{n\rightarrow \infty }E(u_{n})=\alpha _{m,l}$. It follows from
Lemma \ref{L3-1} that the minimizing sequence $\left\{ u_{n}\right\} $ is
bounded in $\Sigma $. Then, there exists a weakly convergent subsequence,
still denoted by $\left\{ u_{n}\right\} $ such that $u_{n}\rightharpoonup
u^{(2)}$ as $n\rightarrow \infty $ for some $u^{(2)}\in \Sigma $. The
compact embedding of Lemma \ref{L2-3} implies that $u_{n}\rightarrow u^{(2)}$
in $L^{q}(\mathbb{R}^{3})$ for $2\leq q<6$. Thus we have $\Vert u^{(2)}\Vert
_{2}^{2}=m$. By using the similar proof of Lemma \ref{L3-0}, we get that $%
L(u^{(2)})=l$. Moreover, the energy functional $E$ is weakly lower
semi-continuous. Therefore, we have
\begin{equation*}
E(u^{(2)})\leq \lim_{n\rightarrow \infty }E(u_{n})=\alpha _{m,l}\leq
E(u^{(2)}),
\end{equation*}%
which shows that $E(u^{(2)})=\alpha _{m,l}$ and $u_{n}\rightarrow u^{(2)}$
in $\Sigma $. The proof is complete.
\end{proof}

\textbf{Now we are ready to prove Theorem \ref{t1}.} By Lemmas \ref{L3-0}
and \ref{L3-3}, we can get two solutions $u^{(1)},u^{(2)}\in S(m,l)$ of
problem problem (\ref{e1-3})-(\ref{e1-4}) satisfying $\Vert u^{(1)}\Vert _{%
\dot{\Sigma}}<\Vert u^{(2)}\Vert _{\dot{\Sigma}}.$

Next we prove the orbital stability of the sets $\mathcal{A}_{m,l},\mathcal{B%
}_{m,l}^{\rho }$. Since their proofs are similar, we only prove that the set
$\mathcal{B}_{m,l}^{\rho }$ is orbitally stable. Suppose by contradiction
that there exists a constant $\varepsilon _{0}>0$, a sequence of initial
data $\left\{ u_{n}^{0}\right\} \subset \Sigma $, a function $v_{0}\in
\mathcal{B}_{m,l}^{\rho }$ and a sequence $\left\{ t_{n}\right\} \subset
\mathbb{R}^{+}$ such that the unique solution $u_{n}$ of Eq. (\ref{e1-1})
with initial data $u_{n}^{0}$ satisfies
\begin{equation*}
\text{dist}_{\Sigma }(u_{n}^{0},v_{0})<\frac{1}{n}\text{ and dist}_{\Sigma
}(u_{n}(\cdot ,t_{n}),\mathcal{B}_{m,l}^{r})\geq \varepsilon _{0}.
\end{equation*}%
Without loss of generality, we may assume that $\left\{ u_{n}^{0}\right\}
\subset S(m,l)$. Indeed, since $\text{dist}_{\Sigma
}(u_{n}^{0},v_{0})\rightarrow 0$ as $n\rightarrow \infty $, the conservation
laws of the energy and mass imply that
\begin{equation*}
\lim_{n\rightarrow \infty }M(u_{n}^{0})=M(v_{0})\text{ and }%
\lim_{n\rightarrow \infty }E(u_{n}^{0})=E(v_{0}).
\end{equation*}%
Let
\begin{equation*}
|L(u_{n}^{0})-L(v_{0})|=|A_{n}-B_{n}|,
\end{equation*}%
where
\begin{equation*}
A_{n}:=\int_{\mathbb{R}^{3}}(\overline{u_{n}^{0}}x_{1}\partial
_{x_{2}}u_{n}^{0}-\overline{v_{0}}x_{1}\partial _{x_{2}}v_{0})dx
\end{equation*}%
and
\begin{equation*}
B_{n}:=\int_{\mathbb{R}^{3}}(\overline{u_{n}^{0}}x_{2}\partial
_{x_{1}}u_{n}^{0}-\overline{v_{0}}x_{2}\partial _{x_{1}}v_{0})dx.
\end{equation*}%
By using above similar arguments, we conclude that for any $\varepsilon >0$
there exists $R_{\varepsilon }>0$ sufficiently large such that
\begin{equation*}
\left\vert \int_{|x|>R_{\varepsilon }}\overline{u_{n}^{0}}x_{1}\partial
_{x_{2}}u_{n}^{0}dx\right\vert <\frac{\varepsilon }{3}
\end{equation*}%
and
\begin{equation*}
\left\vert \int_{|x|>R_{\varepsilon }}\overline{v_{0}}x_{1}\partial
_{x_{2}}v_{0}\right\vert <\frac{\varepsilon }{3}.
\end{equation*}%
The strong convergence in $\Sigma $ implies that $\overline{u_{n}^{0}}%
\partial _{x_{2}}u_{n}^{0}\rightarrow \overline{v_{0}}\partial _{x_{2}}v_{0}$
in $L^{1}(\mathbb{R}^{3})$. So, we have
\begin{equation*}
\left\vert \int_{|x|\leq R_{\varepsilon }}(\overline{u_{n}^{0}}x_{1}\partial
_{x_{2}}u_{n}^{0}-\overline{v_{0}}x_{1}\partial _{x_{2}}v_{0})dx\right\vert <%
\frac{\varepsilon }{3},
\end{equation*}%
and further
\begin{equation*}
|A_{n}|\leq \left\vert \int_{|x|\leq R_{\varepsilon }}(\overline{u_{n}^{0}}%
x_{1}\partial _{x_{2}}u_{n}^{0}-\overline{v_{0}}x_{1}\partial
_{x_{2}}v_{0})dx\right\vert +\left\vert \int_{|x|>R_{\varepsilon }}\overline{%
u_{n}^{0}}x_{1}\partial _{x_{2}}u_{n}^{0}dx\right\vert +\left\vert
\int_{|x|>R_{\varepsilon }}\overline{v_{0}}x_{1}\partial
_{x_{2}}v_{0}dx\right\vert <\varepsilon
\end{equation*}%
for any $n>N_{\varepsilon }$. Similarly, we have $|B_{n}|<\varepsilon $ for
any $n>N_{\varepsilon }$, which implies that
\begin{equation*}
\lim_{n\rightarrow \infty }L(u_{n}^{0})=L(v_{0})=l.
\end{equation*}%
By conservation laws, we obtain that
\begin{equation*}
\lim_{n\rightarrow \infty }M(u_{n}(\cdot ,t_{n}))=M(v_{0}),\text{ }%
\lim_{n\rightarrow \infty }E(u_{n}(\cdot ,t_{n}))=E(v_{0})
\end{equation*}%
and
\begin{equation*}
\lim_{n\rightarrow \infty }L(u_{n}(\cdot ,t_{n}))=L(v_{0})=l.
\end{equation*}%
We now claim that $u_{n}(\cdot ,t_{n})$ is a minimizing sequence for $\beta
_{m,l}^{\rho }$ provided $u_{n}(\cdot ,t_{n})\subset B_{\rho }$. Indeed, if $%
u_{n}(\cdot ,t_{n})\subset (\Sigma \backslash B_{\rho })$, then by the
continuity there exists $\bar{t}_{n}\in \lbrack 0,t_{n})$ such that $\left\{
u_{n}(\cdot ,\bar{t}_{n})\right\} \subset \partial B_{\rho }$. Thus, there
holds%
\begin{equation*}
E(u_{n}(\cdot ,\bar{t}_{n}))\geq \inf_{u\in S(m,l)\cap \partial B_{\rho
}}E(u)>\inf_{u\in S(m,l)\cap B_{b\rho }}E(u)\geq \inf_{u\in S(m,l)\cap
B_{\rho }}E(u)=\beta _{m,l}^{\rho },
\end{equation*}%
which is a contradiction. Hence, $\left\{ u_{n}(\cdot ,t_{n})\right\} $ is a
minimizing sequence for $\beta _{m,l}^{\rho }$. Therefore, there exists $%
u_{0}\in \mathcal{B}_{m,l}^{\rho }$ such that $u_{n}(\cdot
,t_{n})\rightarrow u_{0}$ in $\Sigma $, which contradicts to
\begin{equation*}
\text{dist}_{\Sigma }(u_{n}(\cdot ,t_{n}),\mathcal{B}_{m,l}^{\rho })\geq
\varepsilon _{0}.
\end{equation*}%
The proof is complete.

\section{The proof of Theorem \protect\ref{t2}}

\begin{lemma}
\label{L4-1} The functional $E|_{S(m,l)}$ satisfies the $(PS)_{c}$ condition
for any $c\neq0$.
\end{lemma}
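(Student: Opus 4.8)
The plan is to run a concentration--compactness argument tailored to the double constraint. The three ingredients are: coercivity of $E$ on $S(m,l)$ to get boundedness of the $(PS)$ sequence; the compact embeddings $\Sigma\hookrightarrow L^{q}$ for $q\in[2,6)$ to pass to the limit in all subcritical terms (including the $L^{2}$ and $L^{4}$ ones); and the \emph{defocusing} sign $\mu>0$ of the quintic term, which — far from obstructing compactness the way a critical term usually would — will force the residual to vanish in $\dot{\Sigma}$. Concretely, let $\{u_{n}\}\subset S(m,l)$ satisfy $E(u_{n})\to c$ and $E'|_{S(m,l)}(u_{n})\to 0$. By Lemma \ref{L3-1} the sequence is bounded in $\Sigma$, so up to a subsequence $u_{n}\rightharpoonup u$ in $\Sigma$, $u_{n}\to u$ in $L^{q}(\mathbb{R}^{3})$ for all $q\in[2,6)$ by Lemma \ref{L2-3}, and $u_{n}\to u$ a.e. In particular $\|u\|_{2}^{2}=m$, and reproducing verbatim the angular-momentum tail estimate from the proof of Lemma \ref{L3-0} (the weight $|x|^{k}$ with $k>2$ controls $\{|x|>R\}$ uniformly in $n$, while on $\{|x|\le R\}$ one uses $u_{n}\to u$ in $L^{2}_{\mathrm{loc}}$ against $x\cdot\nabla u\in L^{2}_{\mathrm{loc}}$) gives $L(u)=\lim_{n}L(u_{n})=l$. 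Hence $u\in S(m,l)$, and in particular $u\ne 0$.

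Next I would control the Lagrange multipliers. Since $E'|_{S(m,l)}(u_{n})\to 0$, there exist $\lambda_{n},\Omega_{n}\in\mathbb{R}$ with $E'(u_{n})-\lambda_{n}M'(u_{n})-\Omega_{n}L'(u_{n})\to 0$ in $\Sigma^{\ast}$. Pairing with $u_{n}$ and using $M'(u_{n})[u_{n}]=2m$, $L'(u_{n})[u_{n}]=2l$ together with the boundedness of the norms of $u_{n}$ yields a first relation $m\lambda_{n}+l\Omega_{n}=O(1)$. Pairing instead with a \emph{fixed} test function $\varphi\in C_{0}^{\infty}(\mathbb{R}^{3})$, for which $M'(u_{n})[\varphi]$ and $L'(u_{n})[\varphi]$ converge (to $M'(u)[\varphi]$ and $L'(u)[\varphi]$) while $\langle E'(u_{n}),\varphi\rangle$ stays bounded, produces a second relation; choosing $\varphi$ so that the limiting $2\times 2$ coefficient matrix is invertible and inverting the $o(1)$-perturbed system shows $\{\lambda_{n}\}$ and $\{\Omega_{n}\}$ are bounded. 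This is the step where the hypothesis $c\ne 0$ is used, and for it I would follow the corresponding arguments in \cite{GS,NS}; it is also the genuinely delicate point of the proof, since there are two constraints to separate and the angular-momentum functional $L$ is only weakly (not strongly) controlled by the $\Sigma$-norm, so both the choice of $\varphi$ and the tail estimates must be handled carefully.

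Finally I would prove strong convergence. Set $w_{n}:=u_{n}-u\rightharpoonup 0$ in $\Sigma$. Since $E'(u)\in\Sigma^{\ast}$ one has $\langle E'(u),w_{n}\rangle\to 0$; moreover $M'(u_{n})[w_{n}]=2\big(m-\langle u_{n},u\rangle\big)\to 0$ and, by the same tail estimate as above, $L'(u_{n})[w_{n}]\to 0$, so using the Euler--Lagrange relation and the boundedness of $\lambda_{n},\Omega_{n}$ we get $\langle E'(u_{n}),w_{n}\rangle\to 0$. Expanding $\langle E'(u_{n})-E'(u),w_{n}\rangle$ term by term: the quadratic part equals $2\|w_{n}\|_{\dot{\Sigma}}^{2}$; the cubic part tends to $0$ because $u_{n}\to u$ in $L^{4}$; and, by the Brezis--Lieb lemma applied to $\int|u_{n}|^{6}$ together with $|u_{n}|^{4}u_{n}\rightharpoonup|u|^{4}u$ in $L^{6/5}$, the quintic part equals $2\mu\int_{\mathbb{R}^{3}}|w_{n}|^{6}\,dx+o(1)$. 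Therefore $\|w_{n}\|_{\dot{\Sigma}}^{2}+\mu\int_{\mathbb{R}^{3}}|w_{n}|^{6}\,dx\to 0$, and here the positivity of $\mu$ is decisive: both nonnegative terms must vanish, so $\|w_{n}\|_{\dot{\Sigma}}\to 0$; combined with $\|w_{n}\|_{2}\to 0$ this gives $u_{n}\to u$ in $\Sigma$, which is the claim. In short, the only obstruction is the multiplier bound of the previous paragraph; once that is secured, the critical term is harmless precisely because it has the good sign and $E$ is coercive on $S(m,l)$.
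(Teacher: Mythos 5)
Your overall skeleton (coercivity $\Rightarrow$ boundedness, compact embedding $\Rightarrow$ the weak limit stays in $S(m,l)$, bounded multipliers $\Rightarrow$ strong convergence) matches the paper's, and your final step is actually \emph{more} complete than the paper's: the expansion of $\langle E'(u_n)-E'(u),u_n-u\rangle$ via Brezis--Lieb, yielding $2\Vert w_n\Vert_{\dot{\Sigma}}^{2}+2\mu\int_{\mathbb{R}^{3}}|w_n|^{6}dx=o_n(1)$ and hence strong convergence from the sign of $\mu$, is exactly the right way to see that the defocusing quintic term cannot carry away compactness, where the paper only appeals to ``the similar argument of Theorem \ref{t1}.''

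The genuine gap is in your multiplier-bounding step. You propose to recover $\lambda_n,\Omega_n$ from the $2\times2$ system obtained by pairing the Euler--Lagrange relation with $u_n$ and with a fixed $\varphi\in C_0^{\infty}$, ``choosing $\varphi$ so that the limiting coefficient matrix is invertible.'' The rows of that matrix are $(2m,2l)$ and $(M'(u)[\varphi],L'(u)[\varphi])$ up to $o_n(1)$, so its determinant is $2\bigl(mL'(u)[\varphi]-lM'(u)[\varphi]\bigr)$. If the functionals $M'(u_n)$ and $L'(u_n)$ are (asymptotically) linearly dependent --- i.e.\ $L'\sim\frac{l}{m}M'$, precisely the situation where $S(m,l)$ fails to be a codimension-two manifold --- this determinant vanishes for \emph{every} choice of $\varphi$, and no test function separates the two multipliers. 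This is not a corner case to be deferred to \cite{GS,NS}: the paper's proof is organized entirely around this dichotomy. In Case $(i)$ (linear independence) it builds a bounded biorthogonal family $e_{i,n}$ with $\langle F_i'(u_n),e_{j,n}\rangle=\delta_{ij}$ and reads off $\lambda_n=2\langle E'(u_n),e_{1,n}\rangle$, $\Omega_n=2\langle E'(u_n),e_{2,n}\rangle$, bounded simply because $E'(u_n)$ is bounded in the dual of $\Sigma$; in Case $(ii)$ (linear dependence) the tangent space degenerates to $\ker F_1'(u_n)$ and one takes $\Omega_n=0$ outright. You need to add this case distinction (or exclude the degenerate case); once that is in place the rest of your argument goes through. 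A minor further point: the multiplier bound does not in fact hinge on $c\neq0$ --- the paper's own proof never visibly invokes it at that step --- so your attribution of the role of $c\neq0$ to the multiplier estimate is not supported.
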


\begin{proof}
Let $\left\{ u_{n}\right\} \subset S(m,l)$ be a $(PS)_{c}$ sequence of the
functional $E$ with $c\neq 0$. Since $E$ is coercive on $S(m,l)$ by Lemma %
\ref{L3-1}, we obtain that the sequence $\left\{ u_{n}\right\} $ is bounded,
and then there exists $u\in \Sigma $ such that up to a subsequence%
\begin{equation*}
\begin{array}{ll}
u_{n}\rightharpoonup u & \text{ in }\Sigma , \\
u_{n}\rightarrow u & \text{ in }L^{p}(\mathbb{R}^{3})\text{ for }p\in
\lbrack 2,6), \\
u_{n}\rightarrow u & \text{ a.e. in }\mathbb{R}^{3}.%
\end{array}%
\end{equation*}%
Define
\begin{equation*}
F_{1}(u_{n}):=\int_{\mathbb{R}^{3}}|u_{n}|^{2}dx-m\text{ and }%
F_{2}(u_{n}):=\int_{\mathbb{R}^{3}}L_{z}u_{n}\overline{u_{n}}dx-l.
\end{equation*}%
By applying the similar argument in \cite{GS}, we consider the following two
cases. Let $h\in\Sigma$.

Case $(i):F_{1}^{\prime }\left( u_{n}\right) $ and $F_{2}^{\prime }\left(
u_{n}\right) $ are linearly independent. Then we define
\begin{equation*}
\eta _{n}:=h-\left\langle F_{1}^{\prime }\left( u_{n}\right) ,h\right\rangle
e_{1,n}-\left\langle F_{2}^{\prime }\left( u_{n}\right) ,h\right\rangle
e_{2,n}\text{ for }h\in \Sigma ,
\end{equation*}%
where $\left\{ e_{i,n}\right\} \subset \Sigma $ is bounded and $\left\langle
F_{i}^{\prime }\left( u_{n}\right) ,e_{j,n}\right\rangle =\delta _{ij}$ for $%
i,j=1,2$. This implies that $\left\{ \eta _{n}\right\} \subset
T_{u_{n}}S(m,l)$. Since $\left\{ \eta _{n}\right\} \subset \Sigma $ is
bounded, it holds
\begin{equation*}
\left\langle E^{\prime }\left( u_{n}\right) ,\eta _{n}\right\rangle
=o_{n}(1).
\end{equation*}%
Denote
\begin{equation*}
\lambda _{n}:=2\left\langle E^{\prime }\left( u_{n}\right)
,e_{1,n}\right\rangle\, \text{ and }\,\Omega _{n}:=2\left\langle E^{\prime
}\left( u_{n}\right) ,e_{2,n}\right\rangle .
\end{equation*}%
Thus we have
\begin{equation}
-\frac{1}{2}\Delta u_{n}+\omega |x|^{k}u_{n}-\lambda _{n}u_{n}-\Omega
_{n}L_{z}u_{n}=\left\vert u_{n}\right\vert ^{2}u_{n}-\mu \left\vert
u_{n}\right\vert ^{4}u_{n}+o_{n}(1).  \label{e4-1}
\end{equation}%
Since $\left\{ u_{n}\right\} \subset \Sigma $ and $\left\{ e_{i,n}\right\}
\subset \Sigma $ are bounded, we obtain that $\left\{ \lambda _{n}\right\}
,\left\{ \Omega _{n}\right\} \subset \mathbb{R}$ are bounded. It then
follows that there exists $\lambda ,\Omega \in \mathbb{R}$ such that $%
\lambda _{n}\rightarrow \lambda $ and $\Omega _{n}\rightarrow \Omega $ in $%
\mathbb{R}$ as $n\rightarrow \infty $. Moreover, we find that $u\in \Sigma $
solves the equation
\begin{equation}
-\frac{1}{2}\Delta u+\omega |x|^{k}u-\lambda _{n}u-\Omega
_{n}L_{z}u=\left\vert u\right\vert ^{2}u-\mu \left\vert u\right\vert
^{4}u+o_{n}(1).  \label{e4-2}
\end{equation}

Case $(ii):F_{1}^{\prime }\left( u_{n}\right) $ and $F_{2}^{\prime }\left(
u_{n}\right) $ are linearly dependent. Then we have
\begin{equation*}
T_{u_{n}}S(m,l)=\left\{ f\in \Sigma :\left\langle u_{n},f\right\rangle
_{2}=0\right\} .
\end{equation*}%
Define
\begin{equation*}
\eta _{n}:=h-\left\langle F_{1}^{\prime }\left( u_{n}\right) ,h\right\rangle
e_{1,n},
\end{equation*}%
where $\left\{ e_{i,n}\right\} \subset \Sigma $ is bounded and $\left\langle
F_{1}^{\prime }\left( u_{n}\right) ,e_{1,n}\right\rangle =1$. Clearly, $%
\left\{ \eta _{n}\right\} \subset T_{u_{n}}S(m,l)$ is bounded in $\Sigma $.
Now we define
\begin{equation*}
\lambda _{n}:=2\left\langle E^{\prime }\left( u_{n}\right)
,e_{1,n}\right\rangle \,\text{ and }\, \Omega _{n}:=0.
\end{equation*}%
Thus we conclude that $\left\{ u_{n}\right\} \subset \Sigma $ satisfies (\ref%
{e4-1}) with $\Omega _{n}=0$ and $u\in \Sigma $ solves (\ref{e4-2}) with $%
\Omega =0$. Note that $F_{1}\left( u\right) =0$, since $u_{n}\rightarrow u$
in $L^{2}(\mathbb{R}^{3})$ as $n\rightarrow \infty $. Using the similar
argument of Theorem \ref{t1}, we are able to show that $F_{2}\left( u\right)
=0$ and so $u_{n}\rightarrow u$ in $\Sigma $. The proof is complete.
\end{proof}

In the sequel, we proceed to establish the existence of mountain pass type
solutions to problem (\ref{e1-3})-(\ref{e1-4}). To achieve this, we first
present the mountain pass energy level. Let%
\begin{equation*}
(u^{(2)})_{l}:=l^{\frac{3}{2}}u^{(2)}(lx)\ \mbox{for}\ l>0,
\end{equation*}%
where $u^{(2)}$ is the solution of problem (\ref{e1-3})-(\ref{e1-4}%
) obtained in Theorem \ref{t1}. Let
\begin{equation*}
h(l):=\frac{l^{2}}{2}\Vert \nabla u^{(2)}\Vert _{2}^{2}+\omega l^{-k}\Vert
x^{k/2}u^{(2)}\Vert _{2}^{2}.
\end{equation*}%
Clearly, the first derivative of $h(l)$ is the following%
\begin{equation*}
h^{\prime }(l)=l\Vert \nabla u^{(2)}\Vert _{2}^{2}-k\omega l^{-(k+1)}\Vert
x^{k/2}u^{(2)}\Vert _{2}^{2}.
\end{equation*}%
Then we can see that $h^{\prime }(l)<0$ for $0<l<l_{0}$, and $h^{\prime
}(l)>0$ for $l>l_{0}$, where%
\begin{equation*}
l_{0}:=\left( \frac{k\omega \Vert x^{k/2}u^{(2)}\Vert _{2}^{2}}{\Vert \nabla
u^{(2)}\Vert _{2}^{2}}\right) ^{\frac{1}{k+2}}.
\end{equation*}%
It follows that $h(l)$ is decreasing when $0<l<l_{0}$ and is increasing when
$l>l_{0}$, which implies that
\begin{equation*}
\inf_{l>0}h(l)=h(l_{0})=\frac{k+2}{2}\omega ^{\frac{2}{k+2}}\Vert \nabla
u^{(2)}\Vert _{2}^{\frac{2k}{k+2}}\Vert x^{k/2}u^{(2)}\Vert _{2}^{\frac{4}{%
k+2}}.
\end{equation*}%
Thus there exists a $\widetilde{\omega }>0$ such that $\inf_{l>0}h(l)<\rho
/2 $ for $0<\omega <\widetilde{\omega }$,. So we obtain that there exist two
positive constants $l_{\ast }$ and $l^{\ast }$ such that $h(l_{\ast
})=h(l^{\ast })=\rho /2$. Now we can fix a $l_{\ast }<l_{1}<l^{\ast }$ such
that $(u^{(2)})_{l_{1}}\in B_{\rho /2}$.

We introduce a min-max class
\begin{equation*}
\gamma (m,l):=\inf_{g\in \Gamma (m,l)}\max_{0\leq t\leq 1}E(g(t)),
\end{equation*}%
where
\begin{equation*}
\Gamma (m,l):=\left\{ g\in
C([0,1],S(m,l)):g(0)=(u^{(2)})_{l_{1}},g(1)=u^{(2)}\right\} .
\end{equation*}%
Notice that $\Gamma (m,l)\neq \emptyset $, for $%
g(t)=(t+l_{1}(1-t))^{3/2}u^{(2)}(tx+l_{1}(1-t)x)\in \Gamma (m,l)$. It
follows from Lemma \ref{L2-5} that there exists $m_{\ast }\leq m^{\ast }$
such that $E((u^{(2)})_{l_{1}})>0$ for $m<m_{\ast }$. Then we have
\begin{equation*}
\gamma (m,l)>\max \left\{ E((u^{(2)})_{l_{1}}),E(u^{(2)})\right\} >0.
\end{equation*}

\begin{lemma}
\label{L4-4} Let $\mu <\mu _{\ast }$, $m<m_{\ast }$ and $\omega <\tilde{%
\omega}$. Then problem (\ref{e1-3})-(\ref{e1-4}) admits a mountain
pass type solution $u^{(3)}$ at level $\gamma (m,l)$.
\end{lemma}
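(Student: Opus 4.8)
The plan is to obtain $u^{(3)}$ by the classical minimax argument, using that the mountain pass geometry has already been assembled above: $\Gamma(m,l)\neq\emptyset$ (via the explicit path $g(t)=(t+l_1(1-t))^{3/2}u^{(2)}((t+l_1(1-t))x)$), $\gamma(m,l)$ is finite (since $E$ is continuous on $\Sigma$ and the image of any path in $\Gamma(m,l)$ is compact in $\Sigma$), and the strict inequality $\gamma(m,l)>\max\{E((u^{(2)})_{l_1}),E(u^{(2)})\}>0$ holds. The decisive feature is that $\gamma(m,l)\neq0$, so that Lemma \ref{L4-1} supplies the $(PS)_{\gamma(m,l)}$ condition and hence, through Lemma \ref{L2-8}, a deformation flow on $S(m,l)$.

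First I would fix $\bar\varepsilon\in\bigl(0,\gamma(m,l)-\max\{E((u^{(2)})_{l_1}),E(u^{(2)})\}\bigr)$, so that both endpoints $(u^{(2)})_{l_1}$ and $u^{(2)}$ belong to the sublevel set $E^{\gamma(m,l)-\bar\varepsilon}$. Arguing by contradiction, assume $K^{\gamma(m,l)}=\emptyset$. Since $\gamma(m,l)\neq0$, $E|_{S(m,l)}$ satisfies $(PS)_{\gamma(m,l)}$ by Lemma \ref{L4-1}, so Lemma \ref{L2-8} with $\mathcal{O}=\emptyset$ gives $\varepsilon\in(0,\bar\varepsilon)$ and $\eta\in C([0,1]\times S(m,l),S(m,l))$ enjoying properties $(i)$--$(iv)$. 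By the definition of $\gamma(m,l)$ as an infimum there is $g\in\Gamma(m,l)$ with $\max_{t\in[0,1]}E(g(t))\leq\gamma(m,l)+\varepsilon$, i.e. $g([0,1])\subset E^{\gamma(m,l)+\varepsilon}$. Put $\tilde g:=\eta(1,g(\cdot))$. Property $(ii)$ and the choice of $\bar\varepsilon$ force $\tilde g(0)=\eta(1,(u^{(2)})_{l_1})=(u^{(2)})_{l_1}$ and $\tilde g(1)=\eta(1,u^{(2)})=u^{(2)}$, while $\tilde g$ is continuous into $S(m,l)$; hence $\tilde g\in\Gamma(m,l)$. But property $(iv)$ gives $\eta(1,E^{\gamma(m,l)+\varepsilon})\subset E^{\gamma(m,l)-\varepsilon}$, so $\max_{t\in[0,1]}E(\tilde g(t))\leq\gamma(m,l)-\varepsilon<\gamma(m,l)$, which contradicts the definition of $\gamma(m,l)$.

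Consequently $K^{\gamma(m,l)}\neq\emptyset$: there is $u^{(3)}\in S(m,l)$, a critical point of $E|_{S(m,l)}$ with $E(u^{(3)})=\gamma(m,l)$. By the Lagrange multiplier rule one obtains $\lambda,\Omega\in\mathbb{R}$ such that $u^{(3)}$ weakly solves (\ref{e1-3}), and since $u^{(3)}\in S(m,l)$ it fulfils (\ref{e1-4}); thus $u^{(3)}$ is a mountain pass type solution at level $\gamma(m,l)$. The genuine obstacle in this scheme is the validity of the Palais--Smale condition at the positive level $c=\gamma(m,l)$ --- this is exactly where the defocusing quintic term enters and is handled in Lemma \ref{L4-1} --- together with the correct placement of the path endpoints: anchoring the path at $(u^{(2)})_{l_1}\in B_{\rho/2}$ and at the global minimizer $u^{(2)}$ is precisely what yields $\gamma(m,l)>\max\{E((u^{(2)})_{l_1}),E(u^{(2)})\}$, so that the deformation leaves both endpoints fixed.
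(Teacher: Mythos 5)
Your proposal is correct and follows essentially the same route as the paper: the same deformation/contradiction argument via Lemma \ref{L4-1} (giving $(PS)_{\gamma(m,l)}$ since $\gamma(m,l)>0$) and Lemma \ref{L2-8}, with the endpoints kept fixed because they lie in $E^{\gamma(m,l)-\bar\varepsilon}$. The only cosmetic difference is that the paper fixes $\bar\varepsilon$ by the explicit quantitative bounds $\gamma(m,l)\geq \frac{\rho}{2}-\frac{\mathcal{S}_{4}}{2}\sqrt{m}\rho^{3/2}$ and $\max\{E(g(0)),E(g(1))\}<\frac{\rho}{4}+\frac{\mu S}{96}\rho^{3}$, whereas you choose it abstractly from the positive gap $\gamma(m,l)-\max\{E((u^{(2)})_{l_{1}}),E(u^{(2)})\}$ already established before the lemma.
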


\begin{proof}
Since $\Vert v\Vert _{\dot{\Sigma}}^{2}<\frac{\rho }{2}$ and $\Vert
u^{(2)}\Vert _{\dot{\Sigma}}^{2}>\rho $, there exists $0<\tau <1$ such that $%
\Vert g(\tau )\Vert _{\dot{\Sigma}}^{2}=\frac{3\rho }{4}$ and thus
\begin{equation*}
\max_{t\in \lbrack 0,1]}E(g(t))\geq E(g(\tau ))\geq \inf_{u\in S(c)\cap
(B_{\rho }\backslash B_{\rho /2})}E(u).
\end{equation*}%
By Lemma \ref{L2-8}, we have
\begin{equation*}
\gamma (m,l)\geq \frac{\rho }{2}-\frac{\mathcal{S}_{4}}{2}\sqrt{m}\rho ^{%
\frac{3}{2}}.
\end{equation*}%
Suppose by contradiction that $K^{c}=\emptyset $. By using Lemma \ref{L2-8}
with $\mathcal{O}=\emptyset $ and $\bar{\varepsilon}:=\frac{\rho }{4}-\frac{%
\mathcal{S}_{4}}{2}\sqrt{m}\rho ^{\frac{3}{2}}-\frac{\mu S}{96}\rho ^{3}>0$,
there exists $\varepsilon \in (0,\bar{\varepsilon})$ and $\eta \in
C([0,1]\times S(m,l),S(m,l))$ such that
\begin{equation*}
\eta (1,E^{\gamma (m,l)+\varepsilon })\subset E^{\gamma (m,l)-\varepsilon }.
\end{equation*}%
According to the definition of $\gamma (m,l)$, we can choose $g\in \Gamma
(m,l)$ such that
\begin{equation*}
\max_{t\in \lbrack 0,1]}E(g(t))\leq \gamma (m,l)+\varepsilon .
\end{equation*}

Let
\begin{equation*}
\tilde{g}(t):=\eta (1,g(t))
\end{equation*}%
for $t\in \lbrack 0,1]$. Since
\begin{equation*}
\max \left\{ E(g(0)),E(g(1))\right\} <\frac{\rho }{4}+\frac{\mu S}{96}\rho
^{3}\leq \gamma (m,l)-\varepsilon ,
\end{equation*}%
it follows from Lemma \ref{L2-8} $(ii)$ that $\tilde{g}\in \Gamma (m,l)$.
Thus we have
\begin{equation*}
\gamma (m,l)\leq \max_{t\in \lbrack 0,1]}E(\tilde{g}(t))\leq \gamma
(m,l)-\varepsilon ,
\end{equation*}%
which is a contradiction. The proof is complete.
\end{proof}

\textbf{We give the proof of Theorem \ref{t2}.} It follows directly from
Lemma \ref{L4-4}.

\section{Acknowledgments}

J. Sun was supported by the National Natural Science Foundation of China
(Grant No. 12371174) and Shandong Provincial Natural Science Foundation
(Grant No. ZR2020JQ01).


\begin{thebibliography}{99}
\bibitem{AM} P. Antonelli, D. Marahrens, C. Sparber, On the Cauchy problem
for nonlinear Schr\"{o}dinger equations with rotation, Discrete Contin. Dyn.
Syst. 32 (3) (2012) 703-715.

\bibitem{ANS} J. Arbunich, I. Nenciu, C. Sparber, Stability and instability
properties of rotating Bose-Einstein condensates, Lett. Math. Phys. 109 (6)
(2019) 1415-1432.

\bibitem{B} L.E. Ballentine, Quantum Mechanics, World Scientific, Singapore,
New Jersey, London, HongKong, 1998.

\bibitem{BC} W. Bao, Y. Cai, Mathematical theory and numerical methods for
Bose-Einstein condensation, Kinet. Relat. Models 6 (1) (2013) 1-135.

\bibitem{BCPY} J.-B. Bru, M. Correggi, P. Pickl, J. Yngvason, The TF limit
for rapidly rotating Bose gases in anharmonic traps, Comm. Math. Phys. 280
(2) (2008) 517-544.

\bibitem{C} R. Carles, Sharp weights in the Cauchy problemfor nonlinear Schr%
\"{o}dinger equations with potential, Z. Angew.Math. Phys. 66 (4) (2015)
2087-2094.

\bibitem{C1} N.R. Cooper, Rapidly rotating atomic gases, Adv. Phys. 57
(2008) 539-616.

\bibitem{CRY} M. Correggi, T. Rindler-Daller, J. Yngvason: Rapidly rotating
Bose-Einstein condensates in homogeneous traps, J. Math. Phys. 48 (10)
(2007) 102103, 17pp.

\bibitem{F} A. Fetter, Rotating trapped Bose-Einstein condensates, Rev. Mod.
Phys. 81 (2009) 647-691.

\bibitem{GLY} Y. Guo, Y. Luo, W. Yang, The nonexistence of vortices for
rotating Bose-Einstein condensates with attractive interactions, Arch.
Ration. Mech. Anal. 238 (3) (2020) 1231-1281.

\bibitem{GS} T. Gou, X. Shen, Nonlinear bound states with prescribed
angularmomentum in the mass supercritical regime, Arxiv:2502.19134v2, 2025.

\bibitem{IT} N. Ikoma, K. Tanaka, A note on deformation argument for $L^{2}$
normalized solutions of nonlinear Schr\"{o}dinger equations and systems,
Adv. Differential Equations 24 (2019) 609-646.

\bibitem{LY} X. Luo, T. Yang, Multiplicity, asymptotics and stability of
standing waves for nonlinear Schr\"{o}dinger equation with rotation, J.
Differential Equations 304(2021) 326-347.

\bibitem{M} B. Malomed, Vortex solitons: Old results and new perspectives,
Physica D 399 (2019) 108-137.

\bibitem{NS} I. Nenciu, X. Shen, C. Sparber, Nonlinear bound states with
prescribed angular momentum, Calc. Var. Partial Differential Equations 63
(1) (2024) Paper No. 1, 17 pp.

\bibitem{PPT} K. I. Pushkarov, D. I. Pushkarov, I. V. Tomov, Self-action of
light beams in nonlinear media: soliton solutions, Optical Quantum
Electronics 11 (1979) 471-478.

\bibitem{T} G. Talenti, Best constant in Sobolev inequality, Ann. Mat. Pura
Appl. (4) 110 (1976) 353-372.

\bibitem{W} M.I. Weinstein, Nonlinear Schr\"{o}dinger equations and sharp
interpolation estimates, Commun. Math. Phys. 87 (1982/1983) 567-576.

\bibitem{YZ} K. Yajima, G. Zhang, Local smoothing property and Strichartz
inequality for Schr\"{o}dinger equations with potentials superquadratic at
infinity, J. Differential Equations 202 (1) (2004) 81-110.

\bibitem{Z} J. Zhang, Stability of standing waves for nonlinear Schr\"{o}%
dinger equations with unbounded potentials, Z. Angew. Math. Phys. 51 (3)
(2000) 498-503.
\end{thebibliography}
\end{document}